\documentclass[reqno,11pt]{amsart}
\usepackage{amsmath,amsthm,amsfonts,amssymb,latexsym,epsfig,geometry,youngtab}
\usepackage{ytableau}
\usepackage{hyperref,times}
\geometry{left=2.5cm,right=2.5cm,top=3cm,bottom=3cm}

\newtheorem{theorem}{Theorem}[section]

\newtheorem{lemma}{Lemma}[section]
\newtheorem{corollary}{Corollary}[section]
\newtheorem{conj}{Conjecture}[section]
\theoremstyle{remark}
\theoremstyle{definition}

\numberwithin{equation}{section}

\makeatother
\makeatletter
\def\section{\@startsection{section}{1}
\z@{.7\linespacing\@plus\linespacing}{.5\linespacing}
{\normalfont\scshape}}
\makeatother

\begin{document}

\title[Partition-theoretic interpretation for certain truncated series]
{Partition-theoretic interpretation for certain truncated series}
\author{D. S. Gireesh$^1$ and B. Hemanthkumar$^1$}
\address{$^1$ Department of Mathematics, M. S. Ramaiah University of Applied Sciences, Peenya, Bengaluru-560 058, Karnataka, India}

\email{gireeshdap@gmail.com}\email{hemanthkumarb.30@gmail.com}

\begin{abstract}
In this article, we provide partition-theoretic interpretations for some new truncated pentagonal number theorem and identities of Gauss. Also, we deduce few inequalities for some partition functions.

\end{abstract}

\subjclass[2010]{11P81, 11P83, 11P84, 05A17} \keywords{Partitions; Theta functions; Truncated series}

\maketitle

\section{Introduction}\label{Int}
In 2012, Andrews and Merca \cite{AM1} proved truncated pentagonal number theorem
\begin{equation*}\label{APT}
\frac{1}{(q;q)_\infty}\sum_{\tau =0}^{\nu-1}(-1)^\tau q^{\tau (3\tau +1)/2}\left(1-q^{2\tau +1}\right)=1+(-1)^{\nu-1}\sum_{n=1}^{\infty}\dfrac{q^{\binom{\nu }{2}+(\nu +1)n}}{(q;q)_n}\left[\begin{array}{c}n-1\\ \nu -1\end{array}\right],
\end{equation*}
where \[(a;q)_\infty=\prod_{n=0}^{\infty}(1-aq^n),\qquad(a;q)_n=\frac{(a;q)_\infty}{(aq^n;q)_\infty}\]
and
\[\left[\begin{array}{c}M\\ N\end{array}\right]=\left[\begin{array}{c}M\\ N\end{array}\right]_q=\begin{cases}
0,\qquad \qquad\qquad \quad \,\text{if} \,N<0 \, \,\text{or}\, \, N>M,\\
\dfrac{(q;q)_M}{(q;q)_N(q;q)_{M-N}}, \,\text{otherwise}.
\end{cases}\]
As a consequence of the above result they obtained family of inequalities for the partition function $p(n)$ \cite{A}. Namely, for each $\nu \geq1$,
\begin{equation}\label{pe}
(-1)^{\nu -1}\sum_{\tau =0}^{\nu -1}(-1)^\tau \big(p\left(n-\tau (3\tau +1)/2\right)-p\left(n-\tau (3\tau +5)/2-1\right)\big)\geq 0
\end{equation}
with strict inequality if $n\geq \nu (3\nu +1)/2$. They also gave the partition interpretation for the truncated sum in \eqref{pe} as
\begin{equation*}\label{pe1}
(-1)^{\nu -1}\sum_{\tau =0}^{\nu -1}(-1)^\tau \big(p\left(n-\tau (3\tau +1)/2\right)-p\left(n-\tau (3\tau +5)/2-1\right)\big)=M_\nu (n),
\end{equation*}
where $M_\nu (n)$ counts the number of partitions of $n$ in which $\nu $ is the smallest integer that is not a part and there are more parts greater than $\nu $ compared to less than $\nu $.

Inspired by the above work, Guo and Zeng \cite{GZ} considered two idenitities of Gauss
\begin{align}\label{phi}
1+2\sum_{\tau =1}^{\infty}(-1)^\tau q^{\tau ^2}=\dfrac{(q;q)_\infty}{(-q;q)_\infty}=\phi(-q)
\end{align}
and
\begin{align}\label{psi}
\sum_{\tau =0}^{\infty}(-1)^\tau q^{\tau (2\tau +1)}\left(1-q^{2\tau +1}\right)=\dfrac{(q^2;q^2)_\infty}{(-q;q^2)_\infty}=\psi(-q)
\end{align}
to derive the truncated identities
\begin{align}\label{G1}
\dfrac{(-q;q)_\infty}{(q;q)_\infty}\left(1+2\sum_{\tau =1}^{\nu }(-1)^\tau q^{\tau ^2}\right)=1+(-1)^\nu \sum_{n=\nu +1}^{\infty}\dfrac{(-q;q)_\nu (-1;q)_{n-\nu }q^{(\nu +1)n}}{(q;q)_n}\left[\begin{array}{c}n-1\\ \nu \end{array}\right]
\end{align}
and
\begin{align}\label{G2}
\nonumber\dfrac{(-q;q^2)_\infty}{(q^2;q^2)_\infty}&\sum_{\tau =0}^{\nu -1}(-1)^\tau q^{\tau (2\tau +1)}\left(1-q^{2\tau +1}\right)\\& =1+(-1)^{\nu -1}\sum_{n=\nu }^{\infty}\dfrac{(-q;q^2)_\nu (-q;q^2)_{n-\nu }q^{2(\nu +1)n-\nu }}{(q^2;q^2)_n}\left[\begin{array}{c}n-1\\ \nu -1\end{array}\right]_{q^2},
\end{align}
for all $\nu \geq 1$.

Let $\overline{p}(n)$ be the number of overpartitions of $n$ (see Corteel and Lovejoy \cite{CL}) and $\textnormal{pod}(n)$ be the number of partitions of $n$ wherein odd parts are distinct (see Hirschhorn and Sellers \cite{HS}). The reciprocals of \eqref{phi} and \eqref{psi} give the generating functions for $\overline{p}(n)$ and $\textnormal{pod}(n)$, respectively.

Let $p_{2,4}(n)$ be the number of partitions of $n$ with parts $\not\equiv 2\pmod{4}$ and note that the generating function for $p_{2,4}(n)$ is same as that of $\textnormal{pod}(n)$.

From \eqref{G1} and \eqref{G2}, Guo and Zeng \cite{GZ} obtained the following inequalities
\begin{equation}\label{opi}
(-1)^\nu \left(\overline{p}(n)+2\sum_{\tau =1}^{\nu }(-1)^\tau \overline{p}(n-\tau ^2)\right)\geq 0
\end{equation}
with strict inequality if $n\geq(\nu +1)^2$ and
\begin{equation}\label{dopi}
(-1)^{\nu -1}\sum_{\tau =0}^{\nu -1}(-1)^\tau \big(\textnormal{pod}\left(n-\tau (2\tau +1)\right)-\textnormal{pod}\left(n-(\tau +1)(2\tau +1)\right)\big)\geq 0
\end{equation}
with strict inequality if $n\geq \nu (2\nu +1)$.

Andrews and Merca \cite{AM2} revised \eqref{G1} and \eqref{G2} as following:
\begin{equation*}\label{G3}
\dfrac{(-q;q)_\infty}{(q;q)_\infty}\left(1+2\sum_{\tau =1}^{\nu }(-1)^\tau q^{\tau ^2}\right)=1+2(-1)^\nu \dfrac{(-q;q)_\nu }{(q;q)_\nu }\sum_{\tau =0}^{\infty}\dfrac{q^{(\nu +1)(\nu +\tau +1)}(-q^{\nu +\tau +2};q)_\infty}{(1-q^{\nu +\tau +1})(q^{\nu +\tau +2};q)_\infty}
\end{equation*}
and
\begin{equation*}\label{G4}
\dfrac{(-q;q^2)_\infty}{(q^2;q^2)_\infty}\sum_{\tau =0}^{2\nu -1}(-q)^{\tau (\tau +1)/2} =1-(-1)^\nu \dfrac{(-q;q^2)_\nu }{(q^2;q^2)_{\nu -1}}\sum_{\tau =0}^{\infty}\dfrac{q^{\nu (2\tau +2\nu +1)}(-q^{2\tau +2\nu +3};q^2)_\infty}{(q^{2\nu +2\tau +2};q^2)_\infty}.
\end{equation*}
From these two identities they deduced interpretations of the sums in the inequalities \eqref{opi} and \eqref{dopi}:
\begin{equation*}\label{opp}
(-1)^\nu \left(\overline{p}(n)+2\sum_{\tau =1}^{\nu }(-1)^\tau \overline{p}(n-\tau ^2)\right)=\overline{\mu}_\nu (n),
\end{equation*}
where $\overline{\mu}_\nu (n)$ counts the number of overpartitions of $n$ in which the first part greater than $\nu $ appears at least $\nu +1$ times and
\begin{equation*}\label{dopp}
(-1)^{\nu -1}\sum_{\tau =0}^{\nu -1}(-1)^\tau \big(\textnormal{pod}\left(n-\tau (2\tau +1)\right)-\textnormal{pod}\left(n-(\tau +1)(2\tau +1)\right)\big)=MP_\nu (n),
\end{equation*}
where $MP_\nu (n)$ is the number of partitions of $n$ in which the first part greater than $2\nu -1$ is odd and appears exactly $\nu $ times. All other odd parts appear at most once.

To know more about truncated identities one can read Burnette and Kolitsch \cite{BK}, Chan et al. \cite{CH}, He et al. \cite{HJZ}, Kolitsch \cite{K}, Mao \cite{M}, Merca et al. \cite{MWY}, Wang and Yee \cite{WY,WY1}, Yee \cite{Y}.

Furthermore Andrews and Merca \cite{AM2} conjectured that
\begin{conj}
	For $\nu $ even or $n$ odd,
	\begin{align}\label{co1}
	(-1)^{\nu -1}\sum_{i=0}^{\nu -1}(-1)^i\big(p\left(n-i(2i+1)\right)-p\left(n-(i+1)(2i+1)\right)\big)\leq M_\nu (n).
	\end{align}
\end{conj}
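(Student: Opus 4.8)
The plan is to pass to generating functions and reduce \eqref{co1} to a single coefficientwise inequality between series with nonnegative coefficients. Denote by $L_\nu(n)$ the left-hand side of \eqref{co1}, and put
\[
T_\nu(q)=\sum_{i=0}^{\nu-1}(-1)^i q^{i(2i+1)}(1-q^{2i+1})=\sum_{i=0}^{\nu-1}(-1)^i\bigl(q^{i(2i+1)}-q^{(i+1)(2i+1)}\bigr),
\]
so that $L_\nu(n)=(-1)^{\nu-1}[q^n]\,T_\nu(q)/(q;q)_\infty$. Since the exponents $i(2i+1)$ are precisely those in \eqref{G2} \cite{GZ}, that identity reads $\frac{(-q;q^2)_\infty}{(q^2;q^2)_\infty}T_\nu(q)=1+(-1)^{\nu-1}Q_\nu(q)$, where
\[
Q_\nu(q)=\sum_{m\ge\nu}\frac{(-q;q^2)_\nu\,(-q;q^2)_{m-\nu}\,q^{2(\nu+1)m-\nu}}{(q^2;q^2)_m}\left[\begin{array}{c}m-1\\ \nu-1\end{array}\right]_{q^2}
\]
has nonnegative coefficients and lowest term $q^{\nu(2\nu+1)}$. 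Multiplying through by $\frac{(q^2;q^2)_\infty}{(-q;q^2)_\infty(q;q)_\infty}=\frac{1}{(q^2;q^4)_\infty}$ (using $(q;q)_\infty=(q;q^2)_\infty(q^2;q^2)_\infty$ and $(q;q^2)_\infty(-q;q^2)_\infty=(q^2;q^4)_\infty$) gives
\[
\frac{T_\nu(q)}{(q;q)_\infty}=\frac{1}{(q^2;q^4)_\infty}+(-1)^{\nu-1}\frac{Q_\nu(q)}{(q^2;q^4)_\infty}.
\]
Extracting the coefficient of $q^n$ and multiplying by $(-1)^{\nu-1}$, for $n\ge1$ one obtains $L_\nu(n)=(-1)^{\nu-1}b(n)+c_\nu(n)$, where $b(n)=[q^n]\,1/(q^2;q^4)_\infty$ counts partitions of $n$ into parts $\equiv2\pmod4$ and $c_\nu(n)=[q^n]\,Q_\nu(q)/(q^2;q^4)_\infty\ge0$.

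On the other side, the truncated pentagonal number theorem of Andrews and Merca \cite{AM1} recalled in Section~\ref{Int} gives $M_\nu(n)=[q^n]P_\nu(q)$ for $n\ge1$, where $P_\nu(q)=\sum_{m\ge1}\frac{q^{\binom{\nu}{2}+(\nu+1)m}}{(q;q)_m}\left[\begin{array}{c}m-1\\ \nu-1\end{array}\right]$ has nonnegative coefficients and lowest term $q^{\nu(3\nu+1)/2}$. Hence, for $n\ge1$,
\[
M_\nu(n)-L_\nu(n)=\bigl(M_\nu(n)-c_\nu(n)\bigr)+(-1)^{\nu}b(n),
\]
the case $n=0$ being trivial (then $L_\nu(0)=(-1)^{\nu-1}$ and $M_\nu(0)=0$). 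Now if $n$ is odd then every part $\equiv2\pmod4$ is even, so $b(n)=0$; and if $\nu$ is even then $(-1)^{\nu}b(n)=b(n)\ge0$. Thus in both cases covered by the conjecture, \eqref{co1} follows once one establishes the single uniform inequality $c_\nu(n)\le M_\nu(n)$ for all $n\ge1$ — equivalently, that $P_\nu(q)-Q_\nu(q)/(q^2;q^4)_\infty$ has nonnegative coefficients. (For even $\nu$ the weaker bound $c_\nu(n)\le M_\nu(n)+b(n)$ already suffices.) This is consistent with small cases and with the inequality $\nu(3\nu+1)/2<\nu(2\nu+1)$ of the lowest powers just recorded.

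The crux is this inequality $c_\nu(n)\le M_\nu(n)$, which I would prove combinatorially. Expanding $Q_\nu(q)/(q^2;q^4)_\infty$ as a partition generating function — the factor $q^{2(\nu+1)m-\nu}$ together with the $q^2$-Gaussian binomial contributing a distinguished run of $\nu$ equal parts of size at least $2(\nu+1)$, the products $(-q;q^2)_\nu(-q;q^2)_{m-\nu}$ contributing distinct odd parts, and $1/(q^2;q^2)_m$ with $1/(q^2;q^4)_\infty$ the remaining even parts — one expects $c_\nu(n)$ to count partitions of $n$ carrying a marked run of exactly $\nu$ equal large parts, subject to the restrictions forced by \eqref{G2}. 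Contracting this run and attaching the forced staircase $1+2+\dots+(\nu-1)$ should map such partitions injectively into the set of partitions of $n$ in which $\nu$ is the smallest missing part and the number of parts exceeding $\nu$ strictly exceeds the number of parts below $\nu$ — a set of size $M_\nu(n)$ — which gives \eqref{co1}; it is likely cleanest to first re-derive $\frac{1}{(q;q)_\infty}\sum_{i=0}^{\nu-1}(-1)^iq^{i(2i+1)}(1-q^{2i+1})=\frac{1}{(q^2;q^4)_\infty}+(-1)^{\nu-1}R_\nu(q)$ with $R_\nu(q)$ in a form whose partition interpretation is transparent (one of the new truncated Gauss identities the paper is after), and inject from there. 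The main obstacle is exactly this step: pinning down the precise partition statistics encoded by the error term, and checking that the image really does satisfy both defining conditions of $M_\nu(n)$ — the bookkeeping around the exponent $2(\nu+1)m-\nu$ and the $q^2$-binomial is delicate. As a fallback there is a purely $q$-series route: from $q^{\tau(3\tau+1)/2}-q^{\tau(2\tau+1)}=q^{\tau(3\tau+1)/2}(1-q^{\tau(\tau+1)/2})$ one gets, with $S_\nu(q)=\sum_{\tau=0}^{\nu-1}(-1)^\tau q^{\tau(3\tau+1)/2}(1-q^{2\tau+1})$, the identity $S_\nu(q)-T_\nu(q)=\sum_{\tau=1}^{\nu-1}(-1)^\tau(1-q^{2\tau+1})q^{\tau(3\tau+1)/2}(1-q^{\tau(\tau+1)/2})$; grouping consecutive terms of this finite alternating sum, dividing by $(q;q)_\infty$ and using the parity observation above should again yield the required nonnegativity, though I expect the combinatorial argument to be the cleaner one.
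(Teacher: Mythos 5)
There is a genuine gap, and it is worth noting first that the paper itself does not prove this statement: it is quoted as an open conjecture of Andrews and Merca, and the authors only establish the weaker bound \eqref{p12}, in which $M_\nu(n)$ is replaced by $p(n-\nu(2\nu+1))$. Your sign analysis is correct and in fact reproduces exactly the mechanism behind \eqref{p12}: writing $T_\nu(q)/(q;q)_\infty = 1/(q^2;q^4)_\infty + (-1)^{\nu-1}Q_\nu(q)/(q^2;q^4)_\infty$ (the paper's version is \eqref{dop2}, where $1/(q^2;q^4)_\infty=\sum p_o(n)q^{2n}$ plays the role of your $b$), and observing that the $(-1)^{\nu}b(n)$ term is harmless when $n$ is odd or $\nu$ is even, is precisely how the parity hypothesis enters. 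So the part of your argument that is solid recovers what the paper actually proves, not the conjecture.

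The gap is the step you yourself flag as the crux. For $n$ odd you have $b(n)=0$, hence $L_\nu(n)=c_\nu(n)$ exactly, so the "single uniform inequality" $c_\nu(n)\le M_\nu(n)$ to which you reduce the problem \emph{is} the conjecture (in its odd-$n$ case), not a simpler surrogate for it; the reduction makes no real progress there. And the proposed combinatorial injection — contracting a marked run of $\nu$ equal large parts coming from the $q^{2(\nu+1)m-\nu}$ factor and the $q^2$-Gaussian binomial, attaching a staircase, and landing inside the set counted by $M_\nu(n)$ — is only a sketch: the partition statistics encoded by $Q_\nu(q)/(q^2;q^4)_\infty$ are not pinned down, the map is not defined, and injectivity and membership in the target set are not verified. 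The comparison of lowest exponents $\nu(3\nu+1)/2<\nu(2\nu+1)$ is consistency evidence, not proof. The fallback $q$-series route is likewise only a suggestion; "grouping consecutive terms \ldots should again yield the required nonnegativity" is exactly the kind of alternating-sum cancellation that these truncated-series problems make delicate, and nothing is checked. As written, the proposal proves only the cases already covered by \eqref{p12}-type bounds and leaves the conjecture open.
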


We establish a partition-theoretic interpretation for the truncated sum in the inequality \eqref{co1} and we show that the sum is bounded above by $p(n-\nu (2\nu +1))$.

In this article, we obtain several  partition-theoretic interpretaion of some new truncated sums of pentagonal number theorem and two identities of Gauss. Also, we deduce inequalities for various partition functions.

MacMahon \cite[p.13]{A} introduced what he termed modular partitions. Given positive integers $m$ and $n$, there exist $b\geq 0$ and $1\leq r\leq m$ such that $n=mb+r$. The $m$-modular partitions are a modification of the Young diagram so that $n$ is represented by a row of $b$ boxes with $m$ in each of them and one box with $r$ in it. For convenience, we put $r$ in the first column of the Young diagram. For example, the below figure shows the $3$-modular Young diagram of the partition $16+14+7+5+2$.
\begin{figure*}[!h]
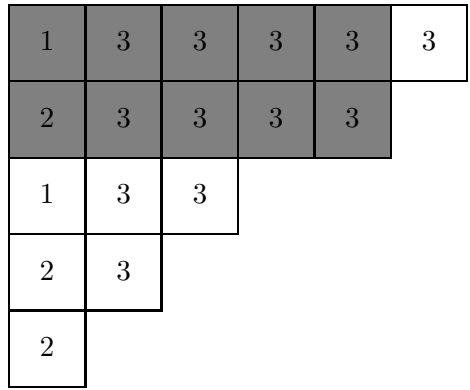

	\centering
	\ytableausetup{centertableaux,mathmode, boxsize=1cm,mathmode}
	\begin{ytableau}
		*(gray) 1 & *(gray)3 & *(gray)3 & *(gray)3 & *(gray)3 & 3 \\
		*(gray)2 & *(gray)3 & *(gray)3 & *(gray)3 & *(gray)3 \\
		1 & 3 & 3 \\
		2 & 3 \\
		2 \\
	\end{ytableau}
	\caption{The $3$-modular Young diagram of 16+14+7+5+2}
\end{figure*}

For a Young diagram of partition $\pi$, define the $\nu $-Durfee rectangle, $\nu $ being a nonnegative integer, to be the largest rectangle which fits in the graph whose width minus its height is $\nu $. In Fig. 1, the $3$-Durfee rectangle of the partition is the shaded rectangle of size $2\times 5$.

For a fixed $\nu >0$ and $n\geq 0$, define $M(a,m,\nu ;n)$ to be the number of partitions of $n$ into the parts $\equiv a\pmod{m}$ such that all parts $\leq m\nu +a$ occur as a part at least once and the parts below the $(\nu +2)$-Durfee rectangle in the $m$-modular graph are strictly less than the width of the rectangle. For example, the partition $20+17+11+8+5+2$ is counted by $M(2,3,2;63)$. Whereas, the partition $23+17+17+8+5+5+2$ is not counted by $M(2,3,2;77)$.

\begin{theorem}\label{M3}
	For a fixed $\nu \geq 0$ and positive integers $a$, $m$ such that $a<m$, we have
	\begin{equation}\label{M13}
	\sum_{n=0}^{\infty}M(a,m,\nu ;n)q^n=q^{a+\nu (m\nu +m+2a)/2}\sum_{\tau =0}^{\infty}\frac{q^{\tau (m\nu +m\tau +m+a)}}{(q^m;q^m)_\tau (q^a;q^m)_{\nu +\tau +1}}
	\end{equation}	
\end{theorem}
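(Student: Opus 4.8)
The plan is to prove \eqref{M13} combinatorially, by dissecting the $m$-modular Young diagram of a partition counted by $M(a,m,\nu;n)$ along its $(\nu+2)$-Durfee rectangle and identifying the three pieces of the dissection with the three factors of the $\tau$-th summand on the right-hand side, where $\tau$ is the height of that Durfee rectangle.

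Concretely, I would start from a partition $\pi$ of $n$ counted by $M(a,m,\nu;n)$ and write its $m$-modular diagram so that each row begins with a cell carrying the residue $a$ and has $m$ in every remaining cell. Let the $(\nu+2)$-Durfee rectangle have height $\tau\ge 0$ and hence width $\tau+\nu+2$. The rectangle itself has $\tau$ rows, each consisting of one $a$-cell and $\tau+\nu+1$ cells equal to $m$, so it contributes $q^{\tau(a+(\tau+\nu+1)m)}=q^{\tau(m\nu+m\tau+m+a)}$, the numerator of the $\tau$-th term. The portion of the diagram strictly to the right of the rectangle lies entirely among the $m$-cells and occupies at most $\tau$ rows, so it is an arbitrary partition with at most $\tau$ parts, each a positive multiple of $m$, with generating function $1/(q^m;q^m)_\tau$. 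For the portion strictly below the rectangle, note first that each row meeting the rectangle has at least $\tau+\nu+2\ge\nu+2$ cells, hence represents a part of value at least $(\nu+1)m+a>\nu m+a$; since $M(a,m,\nu;n)$ demands that each of $a,m+a,\dots,\nu m+a$ occur, all of those parts must appear among the rows below the rectangle. Next, the defining condition on $M$ says the parts below the rectangle have fewer than $\tau+\nu+2$ cells, i.e. are at most $(\nu+\tau)m+a$. So the region below the rectangle is an arbitrary partition into parts from $\{a,m+a,\dots,(\nu+\tau)m+a\}$ containing at least one copy of each of $a,m+a,\dots,\nu m+a$, with generating function $q^{\,a+(m+a)+\cdots+(\nu m+a)}/(q^a;q^m)_{\nu+\tau+1}$. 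Using $a+(m+a)+\cdots+(\nu m+a)=(\nu+1)a+m\nu(\nu+1)/2=a+\nu(m\nu+m+2a)/2$, I would multiply the three generating functions and sum over $\tau\ge 0$ to obtain exactly the right-hand side of \eqref{M13}.

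The step demanding real care is that this dissection is reversible. Given a height $\tau$, a partition $\beta$ with at most $\tau$ parts that are positive multiples of $m$, and a (necessarily nonempty) partition $\gamma$ into parts from $\{a,m+a,\dots,(\nu+\tau)m+a\}$ containing each of $a,m+a,\dots,\nu m+a$, one builds a $\tau\times(\tau+\nu+2)$ block, appends the rows of $\beta$ on its right and $\gamma$ below it, and must check that the outcome is a legitimate $m$-modular diagram of a partition counted by $M(a,m,\nu;n)$ whose $(\nu+2)$-Durfee rectangle has height \emph{exactly} $\tau$. The verifications are: cell counts decrease weakly across the seam, since each of the top $\tau$ rows has at least $\tau+\nu+2$ cells while each row of $\gamma$ has at most $\tau+\nu+1$; a $\tau\times(\tau+\nu+2)$ rectangle fits, so the Durfee height is at least $\tau$; the $(\tau+1)$-st row has at most $\tau+\nu+1<(\tau+1)+(\nu+2)$ cells, so the Durfee height is at most $\tau$; and every part not exceeding $\nu m+a$ occurs, since those parts lie in $\gamma$. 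I expect this maximality check for the Durfee rectangle — together with the matching fact, used in the forward direction, that no part $\le\nu m+a$ can sit inside or to the right of that rectangle — to be the main technical point; everything else is routine manipulation of $q$-Pochhammer symbols.
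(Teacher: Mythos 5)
Your proposal is correct and follows essentially the same route as the paper: dissecting the $m$-modular diagram along the $(\nu+2)$-Durfee rectangle of height $\tau$ into the rectangle, the piece to its right, and the piece below, and matching these with the three factors of the $\tau$-th summand. The paper only sketches this; your write-up supplies the reversibility and Durfee-maximality checks that the paper leaves implicit, and all your computations (in particular $a+(m+a)+\cdots+(\nu m+a)=a+\nu(m\nu+m+2a)/2$) are right.
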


For a fixed $\nu >0$ and $n\geq 0$, define $N_\nu (n)$ be the number of partitions of $n$ such that all parts $\leq \nu -1$ occur at least once and the parts below the $\nu $-Durfee rectangle in the Young diagram are strictly less than the width of the rectangle. For example, the partition $6+5+3+3+2+1$ is counted by $N_3(20)$. Whereas, the partition $6+5+5+2+1+1$ is not counted by $N_3(20)$.
\begin{theorem}\label{Nk}
	For a fixed $\nu \geq 1$, we have
	\begin{equation}\label{nk1}
	\sum_{n=0}^{\infty}N_\nu (n)q^n=q^{\nu (\nu -1)/2}\sum_{\tau =0}^{\infty}\frac{q^{\tau (\nu +\tau )}}{(q;q)_\tau (q;q)_{\nu +\tau -1}}.
	\end{equation}
\end{theorem}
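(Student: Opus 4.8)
The plan is to classify the partitions counted by $N_\nu(n)$ according to the height $\tau\ge 0$ of their $\nu$-Durfee rectangle and then read off the generating function region by region, exactly as one does with the ordinary Durfee square; this also mirrors the proof of Theorem~\ref{M3}.

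First I would fix $\tau\ge 0$ and look at a partition $\pi$ counted by $N_\nu(n)$ whose $\nu$-Durfee rectangle has height $\tau$ and width $\tau+\nu$. Because this rectangle fits in the diagram, the parts in rows $1,\dots,\tau$ are all $\ge \tau+\nu$, hence in particular $\ge\nu$; and the defining condition of $N_\nu$ (the parts below the rectangle are strictly less than its width) forces $\pi_{\tau+1}\le \tau+\nu-1$. Thus $\pi$ splits into three pieces: the $\tau\times(\tau+\nu)$ rectangle itself, contributing $q^{\tau(\tau+\nu)}$; the partition $\lambda$ occupying the columns to the right of the rectangle in rows $1,\dots,\tau$, which is an arbitrary partition with at most $\tau$ parts and so contributes $1/(q;q)_\tau$; and the partition $\mu$ lying strictly below the rectangle, all of whose parts are $\le \tau+\nu-1$.

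The key step is to account for $\mu$ together with the ``all parts $\le\nu-1$ occur at least once'' condition. Since every part of $\pi$ that is $\le\nu-1$ must lie in $\mu$ (the rectangle rows have parts $\ge\nu$), that condition says precisely that $\mu$ contains each of $1,2,\dots,\nu-1$ at least once while having all parts $\le\tau+\nu-1$. Deleting one copy of each of $1,2,\dots,\nu-1$ from $\mu$ is then a bijection onto the set of arbitrary partitions with parts $\le\tau+\nu-1$, and it lowers the weight by $q^{1+2+\cdots+(\nu-1)}=q^{\nu(\nu-1)/2}$; the target partitions are generated by $1/(q;q)_{\nu+\tau-1}$. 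Hence the partitions with $\nu$-Durfee rectangle of height $\tau$ contribute
\[
q^{\tau(\tau+\nu)}\cdot\frac{1}{(q;q)_\tau}\cdot\frac{q^{\nu(\nu-1)/2}}{(q;q)_{\nu+\tau-1}},
\]
and summing over $\tau\ge 0$ yields \eqref{nk1}. I would separately check the degenerate case $\tau=0$ (the rectangle is $0\times\nu$, $\lambda$ is empty, and $\mu$ is all of $\pi$ with parts $\le\nu-1$), which fits the same expression.

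The only real subtlety — and the step I would be most careful with — is disentangling which parts of $\pi$ land in which of the three regions, so that the ``occurs at least once'' constraint falls entirely on $\mu$ and combines cleanly with the bound $\le\tau+\nu-1$ coming from the width; once that bookkeeping is pinned down the rest is an immediate summation.
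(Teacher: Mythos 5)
Your proposal is correct and is essentially the paper's own argument: the paper proves Theorem~\ref{Nk} by the same three-piece Durfee-rectangle decomposition used for Theorem~\ref{M3} (rectangle, partition with at most $\tau$ parts to the right, bounded partition below with the mandatory parts $1,\dots,\nu-1$ split off as the prefactor $q^{\nu(\nu-1)/2}$), merely omitting the details. Your added care in checking that the width bound $\le\tau+\nu-1$ pins the rectangle height to exactly $\tau$, and that the ``occurs at least once'' condition falls entirely on the region below the rectangle, fills in precisely what the paper leaves unsaid.
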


For a fixed $\nu \geq 0$ and $n\geq 0$, define $p(a,m,\nu ;n)$ to be the number of partitions of $n$ in which parts $\equiv a\pmod{m}$ form a partition counted by $M(a,m,\nu ;n-\beta)$, $\beta$ is the sum of parts $\not\equiv a\pmod{m}$.

\begin{theorem}\label{T2}
	For $\nu \geq1$ and $n> \nu (3\nu +5)/2$,
	\begin{align}\label{pi}
	\nonumber(-1)^\nu &\left(\sum_{\tau =0}^{\nu }(-1)^\tau p\left(n-\tau (3\tau +1)/2\right)-\sum_{\tau =0}^{\nu -1}(-1)^\tau p\left(n-\tau (3\tau +5)/2-1\right)\right)\\&=p(2,3,\nu ;n)+p(1,3,\nu ;n).
	\end{align}
\end{theorem}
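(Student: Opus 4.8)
The plan is to compute closed forms for the generating functions of both sides of \eqref{pi} and match coefficients. \emph{Left-hand side.} Using the rearrangement $\sum_{\tau=0}^{\nu}(-1)^\tau q^{\tau(3\tau+1)/2}-\sum_{\tau=0}^{\nu-1}(-1)^\tau q^{\tau(3\tau+5)/2+1}=\sum_{\tau=0}^{\nu-1}(-1)^\tau q^{\tau(3\tau+1)/2}(1-q^{2\tau+1})+(-1)^\nu q^{\nu(3\nu+1)/2}$, dividing by $(q;q)_\infty$ and invoking \eqref{APT}, the left side of \eqref{pi} becomes, for every $n\ge 1$, the coefficient of $q^n$ in
\[
\frac{q^{\nu(3\nu+1)/2}}{(q;q)_\infty}-\sum_{k\ge 1}\frac{q^{\binom{\nu}{2}+(\nu+1)k}}{(q;q)_k}\left[\begin{array}{c}k-1\\ \nu-1\end{array}\right].
\]
In the sum I set $k=\nu+j$, rewrite $\frac{1}{(q;q)_k}\left[\begin{smallmatrix}k-1\\ \nu-1\end{smallmatrix}\right]=\big((1-q^{\nu+j})(q;q)_{\nu-1}(q;q)_{j}\big)^{-1}$, expand $\tfrac{1}{1-q^{\nu+j}}=\sum_{\ell\ge 0}q^{\ell(\nu+j)}$, and apply Euler's identity $\sum_{j\ge 0}z^{j}/(q;q)_j=1/(z;q)_\infty$; the double sum collapses and the displayed expression equals $\frac{q^{\nu(3\nu+1)/2}}{(q;q)_\infty}\,(1-R_\nu)$ with $R_\nu:=\sum_{\ell\ge 0}q^{\nu\ell}(q^\nu;q)_{\ell+1}$. (The constant $(-1)^\nu$ forced by the leading $1$ on the right of \eqref{APT} only affects $n=0$, which is excluded.)

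\emph{Right-hand side.} Since the parts $\not\equiv a\pmod 3$ of a partition counted by $p(a,3,\nu;n)$ are arbitrary, $\sum_n p(a,3,\nu;n)q^n=\frac{(q^a;q^3)_\infty}{(q;q)_\infty}\sum_n M(a,3,\nu;n)q^n$. Substituting Theorem~\ref{M3} (with $m=3$, $a\in\{1,2\}$) and using $(q^a;q^3)_\infty/(q^a;q^3)_{\nu+\tau+1}=(q^{a+3(\nu+\tau+1)};q^3)_\infty$ gives
\[
\sum_{n\ge 0}\big(p(2,3,\nu;n)+p(1,3,\nu;n)\big)q^n=\frac{q^{\nu(3\nu+1)/2}}{(q;q)_\infty}\Big(q^{3\nu+2}f(3\nu+5)+q^{2\nu+1}f(3\nu+4)\Big),
\]
where $f(c):=\sum_{\tau\ge 0}q^{c\tau+3\tau^2}(q^{c+3\tau};q^3)_\infty/(q^3;q^3)_\tau$. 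So \eqref{pi} reduces to the single identity $1-R_\nu=q^{3\nu+2}f(3\nu+5)+q^{2\nu+1}f(3\nu+4)$.

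\emph{The core identity.} I would prove it by showing both sides satisfy the same recursion in $\nu$. For $f$: writing $(q^{c+3\tau};q^3)_\infty=(q^c;q^3)_\infty/(q^c;q^3)_\tau$, then $1/(q^c;q^3)_\tau=(1-q^{c+3\tau})/(q^c;q^3)_{\tau+1}$, and applying the Durfee-square identity $\sum_{\tau\ge 0}z^\tau Q^{\tau^2}/\big((Q;Q)_\tau(z;Q)_{\tau+1}\big)=1/(z;Q)_\infty$ (with $Q=q^3$, $z=q^c$), one obtains $f(c)=1-q^cf(c+3)$, hence $f(c)=\sum_{j\ge 0}(-1)^jq^{jc+3\binom{j}{2}}$ and the right side above equals $\sum_{j\ge 0}(-1)^j\big(q^{2\nu+1+j(3\nu+4)+3\binom{j}{2}}+q^{3\nu+2+j(3\nu+5)+3\binom{j}{2}}\big)$. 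For $R_\nu$: with $\phi(a,z):=\sum_{k\ge 0}z^k(a;q)_k$ and $R_\nu=(1-q^\nu)\phi(q^{\nu+1},q^\nu)$, the elementary recursions $\phi(a,z)=1+(1-a)z\,\phi(aq,z)$ and $(1-z)\phi(a,z)=1-az\,\phi(a,zq)$ give $R_\nu=1-q^{2\nu+1}\phi(q^{\nu+1},q^{\nu+1})$ and $\phi(q^{\nu+1},q^{\nu+1})=1+q^{\nu+1}R_{\nu+1}$, so $R_\nu=1-q^{2\nu+1}-q^{3\nu+2}R_{\nu+1}$; putting $T_\nu=1-R_\nu$ this reads $T_\nu=q^{2\nu+1}+q^{3\nu+2}-q^{3\nu+2}T_{\nu+1}$, and iterating (using $T_\nu\to 0$ as $\nu\to\infty$) reproduces exactly the series displayed for $f$. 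Comparing coefficients finishes the proof, in particular for $n>\nu(3\nu+5)/2$. I expect this last step to be the main obstacle: one has to recognize that the intricate modular-partition series from Theorem~\ref{M3} telescope — through $f(c)=1-q^cf(c+3)$ — into a plain partial theta series, and that $1-R_\nu$ obeys the matching three-term recursion.
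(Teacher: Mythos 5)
Your argument is correct, but it takes a genuinely different route from the paper's. The paper works directly from the pentagonal number theorem: it writes the truncated sum divided by $(q;q)_\infty$ as $1$ minus the two tails $\sum_{\tau\ge\nu+1}$ and $\sum_{\tau\ge\nu}$, recognizes each tail as a limiting case of a ${}_2\phi_1$, and applies Heine's transformation once to each; the output is, term for term, $(q^a;q^3)_\infty/(q;q)_\infty$ times the series of Theorem~\ref{M3} for $a=2$ and $a=1$, so no further identity is needed. You instead import the Andrews--Merca truncated pentagonal number theorem as a black box for the left side, compute the right side from Theorem~\ref{M3} directly, and reduce everything to the single partial-theta identity $1-R_\nu=q^{3\nu+2}f(3\nu+5)+q^{2\nu+1}f(3\nu+4)$, which you settle by showing both sides satisfy $T_\nu=q^{2\nu+1}+q^{3\nu+2}-q^{3\nu+2}T_{\nu+1}$ together with $T_\nu=O(q^{2\nu+1})$, a condition that determines the series uniquely. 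I checked the individual steps --- the collapse of the Andrews--Merca double sum to $q^{\nu(3\nu+1)/2}R_\nu/(q;q)_\infty$, the telescoping $f(c)=1-q^c f(c+3)$ via the Durfee-square identity, and the two recursions for $\phi(a,z)$ --- and they are all sound; the constant $(-1)^\nu$ discrepancy indeed only affects $n=0$, which is excluded. What the paper's route buys is brevity: one application of Heine's transformation replaces your entire third step. What yours buys is independence from Heine's transformation and the explicit byproduct that both sides of \eqref{pi}, after stripping the factor $q^{\nu(3\nu+1)/2}/(q;q)_\infty$, equal the same partial theta series $\sum_{j\ge0}(-1)^j\bigl(q^{2\nu+1+j(3\nu+4)+3j(j-1)/2}+q^{3\nu+2+j(3\nu+5)+3j(j-1)/2}\bigr)$, which is of some independent interest.
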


For example, let $\nu =2$ and $n=17$.

\vspace{0.4cm}

\begin{tabular}{ l|l } 
	\hline
	Partitions counted  & Partitions counted \\ by  $p(2,3,2;17)$ & by $p(1,3,2;17)$\\ 
	\hline
	$8+5+2+2$ & $7+5+4+1$ \\ 
	$8+5+2+1+1$ & $7+4+4+1+1$ \\ 
	&$7+4+3+2+1$\\
	&$7+4+3+1+1+1$\\
	&$7+4+2+2+1+1$\\
	&$7+4+2+1+1+1+1$\\
	&$7+4+1+1+1+1+1+1$\\
\end{tabular}
\vspace{0.4cm}

Hence, $p(17)-p(15)+p(10)-p(16)+p(12)=9=p(2,3,2;17)+p(1,3,2;17).$

\begin{theorem}\label{T3}
	For $\nu \geq 1$ and $n>\nu (2\nu +3)$,
	\begin{align}\label{p1i}
	\nonumber(-1)^\nu &\left(\sum_{\tau =0}^{\nu }(-1)^\tau p\left(2n-\tau (2\tau +1)\right)-\sum_{\tau =0}^{\nu -1}(-1)^\tau p\left(2n-(\tau +1)(2\tau +1)\right)\right)\\&=(-1)^\nu p_o(n)+p(3,4,\nu ;2n)+p(1,4,\nu ;2n)
	\end{align}
	and
	\begin{align}\label{p2i}
	\nonumber(-1)^\nu &\left(\sum_{\tau =0}^{\nu }(-1)^\tau p\left(2n+1-\tau (2\tau +1)\right)-\sum_{\tau =0}^{\nu -1}(-1)^\tau p\left(2n+1-(\tau +1)(2\tau +1)\right)\right)\\&=p(3,4,\nu ;2n+1)+p(1,4,\nu ;2n+1).
	\end{align}
\end{theorem}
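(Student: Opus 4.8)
The plan is to merge \eqref{p1i} and \eqref{p2i} into a single generating‑function identity and then compare coefficients of $q^{2n}$ and $q^{2n+1}$. Set
\[
S_\nu(q):=\sum_{\tau=0}^{\nu}(-1)^\tau q^{\tau(2\tau+1)}-\sum_{\tau=0}^{\nu-1}(-1)^\tau q^{(\tau+1)(2\tau+1)},
\]
so that the bracketed sums in \eqref{p1i} and \eqref{p2i} are exactly the coefficients of $q^{2n}$ and of $q^{2n+1}$ in $S_\nu(q)/(q;q)_\infty$. Pairing consecutive terms of the two sums gives the telescoped form
\[
S_\nu(q)=\sum_{\tau=0}^{\nu-1}(-1)^\tau q^{\tau(2\tau+1)}\left(1-q^{2\tau+1}\right)+(-1)^\nu q^{\nu(2\nu+1)},
\]
whence Gauss's identity \eqref{psi} yields $S_\nu(q)=\psi(-q)+(-1)^\nu R_\nu(q)$ with
\[
R_\nu(q):=q^{(\nu+1)(2\nu+1)}+\sum_{j\ge 0}(-1)^j q^{(\nu+j+1)(2\nu+2j+3)}\left(1-q^{2\nu+2j+3}\right)
\]
a partial theta tail whose lowest term is $q^{(\nu+1)(2\nu+1)}$. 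Dividing by $(q;q)_\infty$ and using
\[
\frac{\psi(-q)}{(q;q)_\infty}=\frac{(q^2;q^2)_\infty}{(-q;q^2)_\infty(q;q)_\infty}=\frac{1}{(q;q^2)_\infty(-q;q^2)_\infty}=\frac{1}{(q^2;q^4)_\infty}=\sum_{m\ge 0}p_o(m)\,q^{2m}
\]
isolates the $(-1)^\nu p_o(n)$ contribution to \eqref{p1i}; since this series carries only even powers, it contributes nothing in the odd case, which accounts for the absence of a $p_o$‑term in \eqref{p2i}.

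What remains is the identity
\[
\frac{R_\nu(q)}{(q;q)_\infty}=\sum_{n\ge 0}p(1,4,\nu;n)\,q^n+\sum_{n\ge 0}p(3,4,\nu;n)\,q^n .
\]
To unpack this, apply Theorem~\ref{M3} with $m=4$ and $a\in\{1,3\}$: a partition counted by $p(a,4,\nu;\cdot)$ is an $M(a,4,\nu;\cdot)$‑partition together with an arbitrary partition into parts $\not\equiv a\pmod 4$, so $\sum_n p(a,4,\nu;n)q^n=\dfrac{(q^a;q^4)_\infty}{(q;q)_\infty}\sum_n M(a,4,\nu;n)q^n$; combining this with Theorem~\ref{M3} and $\dfrac{(q^a;q^4)_\infty}{(q^a;q^4)_{\nu+\tau+1}}=(q^{\,a+4(\nu+\tau+1)};q^4)_\infty$ reduces the identity to the purely $q$‑series statement
\[
R_\nu(q)=q^{(\nu+1)(2\nu+1)}\sum_{\tau\ge 0}\frac{q^{\tau(4\nu+4\tau+5)}(q^{4\nu+4\tau+5};q^4)_\infty}{(q^4;q^4)_\tau}+q^{(\nu+1)(2\nu+3)}\sum_{\tau\ge 0}\frac{q^{\tau(4\nu+4\tau+7)}(q^{4\nu+4\tau+7};q^4)_\infty}{(q^4;q^4)_\tau}.
\]

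To prove this last identity I would start from the truncated Gauss identity \eqref{G2} of Guo--Zeng (equivalently \eqref{G4} of Andrews--Merca). Its left‑hand side equals $\bigl(S_\nu(q)-(-1)^\nu q^{\nu(2\nu+1)}\bigr)/\psi(-q)$, so multiplying through by $\psi(-q)/(q;q)_\infty=1/(q^2;q^4)_\infty$ and invoking $S_\nu(q)/(q;q)_\infty=1/(q^2;q^4)_\infty+(-1)^\nu R_\nu(q)/(q;q)_\infty$ from the first paragraph produces an explicit single‑series formula for $R_\nu(q)/(q;q)_\infty$. Splitting the base‑$q^2$ products $(-q;q^2)_{n-\nu}$ and the $q^2$‑binomial in that series according to the parity of the summation index --- which is precisely the bookkeeping that the $4$‑modular $(\nu+2)$‑Durfee rectangle performs in the definition of $M(a,4,\nu;\cdot)$ --- regroups it into the two $\tau$‑sums above. (Alternatively, one can verify the identity head‑on: in each $q^4$‑indexed series the difference of the $\tau$‑th and $(\tau+1)$‑st summands telescopes onto two consecutive terms of $R_\nu(q)$; a parallel combinatorial sieve over the generalized octagonal numbers $\tau(2\tau+1)$ is also available.) This matching of the partial theta tail $R_\nu(q)$ with the pair of Durfee‑rectangle generating functions of Theorem~\ref{M3} is the one genuine obstacle; everything else is routine manipulation of theta functions and $q$‑Pochhammer symbols.

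Granting the identity, the first paragraph gives
\[
(-1)^\nu\,\frac{S_\nu(q)}{(q;q)_\infty}=(-1)^\nu\sum_{m\ge 0}p_o(m)\,q^{2m}+\sum_{n\ge 0}\bigl(p(1,4,\nu;n)+p(3,4,\nu;n)\bigr)q^n ,
\]
and extracting the coefficient of $q^{2n}$ yields \eqref{p1i} while extracting the coefficient of $q^{2n+1}$ yields \eqref{p2i}. The hypothesis $n>\nu(2\nu+3)$ keeps us past the initial block of exponents where low‑order terms of $R_\nu(q)$ are still active, which is exactly the range needed for the two equalities (and for the companion inequalities they support).
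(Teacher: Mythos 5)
Your reduction has exactly the architecture of the paper's own proof: you split off $\psi(-q)$ from the truncated sum so that $\psi(-q)/(q;q)_\infty=1/(q^2;q^4)_\infty=\sum_m p_o(m)q^{2m}$ supplies the $(-1)^\nu p_o(n)$ term on even powers only, and you correctly reduce the rest, via Theorem~\ref{M3} and the generating function $\sum_n p(a,4,\nu;n)q^n=\frac{(q^a;q^4)_\infty}{(q;q)_\infty}\sum_n M(a,4,\nu;n)q^n$, to the pair of $q$-series identities
\begin{equation*}
\sum_{j=0}^{\infty}(-1)^j q^{j(4\nu+2j+3)}=\sum_{\tau=0}^{\infty}\frac{q^{\tau(4\nu+4\tau+5)}\,(q^{4\nu+4\tau+5};q^4)_\infty}{(q^4;q^4)_\tau},\qquad
\sum_{j=0}^{\infty}(-1)^j q^{j(4\nu+2j+5)}=\sum_{\tau=0}^{\infty}\frac{q^{\tau(4\nu+4\tau+7)}\,(q^{4\nu+4\tau+7};q^4)_\infty}{(q^4;q^4)_\tau},
\end{equation*}
which together say $R_\nu(q)=q^{(\nu+1)(2\nu+1)}\sum_j(-1)^jq^{j(4\nu+2j+3)}+q^{(\nu+1)(2\nu+3)}\sum_j(-1)^jq^{j(4\nu+2j+5)}$ equals the two Durfee-rectangle series. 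The genuine gap is that these identities are never proved: you label them ``the one genuine obstacle'' and offer three candidate routes, none carried out. The paper disposes of exactly this step in two moves, writing each alternating tail as $\lim_{\delta\to0}{}_2\phi_1\bigl(q^4,q^{4\nu+5}/\delta;0;q^4,\delta\bigr)$ (resp.\ $q^{4\nu+7}$) and applying Heine's transformation --- the two tools announced at the end of Section~\ref{Int} --- which is what produces \eqref{eq2}.

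Of your proposed substitutes, none is currently a proof. The telescoping claim fails as stated: the $\tau=0$ summand of the right-hand series is the infinite product $(q^{4\nu+5};q^4)_\infty$, not a difference of two monomials of $R_\nu(q)$, and a series $\sum_\tau f_\tau$ cannot be evaluated by telescoping differences $f_\tau-f_{\tau+1}$ in any case. The route through \eqref{G2} requires converting Guo--Zeng's single sum over $n\ge\nu$, built from $(-q;q^2)_\nu(-q;q^2)_{n-\nu}$ and a $q^2$-binomial, into the two base-$q^4$ series above; ``splitting according to the parity of the summation index'' is not obviously the right surgery, and the leftover term $(-1)^\nu q^{\nu(2\nu+1)}/\psi(-q)$ coming from the missing last summand of $S_\nu(q)$ must also be absorbed --- none of this is routine. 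The combinatorial sieve is not described at all. Everything surrounding the crux (the parity extraction, the role of $n>\nu(2\nu+3)$, the identification with $p(1,4,\nu;\cdot)$ and $p(3,4,\nu;\cdot)$) is fine; inserting the paper's ${}_2\phi_1$-limit-plus-Heine computation would close the argument.
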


\begin{theorem}\label{T4}
	For $\nu \geq 1$ and $n> \nu (2\nu +3)$,
	\begin{align}\label{dpi}
	\nonumber(-1)^\nu &\left(\sum_{\tau =0}^{\nu }(-1)^\tau p_o\left(2n+1-\tau (2\tau +1)\right)-\sum_{\tau =0}^{\nu -1}(-1)^\tau p_o\left(2n+1-(\tau +1)(2\tau +1)\right)\right)\\&= p_o(3,4,\nu ;2n+1)+p_o(1,4,\nu ;2n+1),
	\end{align}
	where $p_o(a,4,\nu ;n)$ $(a\in\{1,3\})$, counts partitions of $n$ into odd parts in which the parts $\equiv a\pmod{4}$ form a partition counted by $M(a,4,\nu ;n-\alpha)$, $\alpha$ is the sum of parts $\not\equiv a\pmod{4}$.
\end{theorem}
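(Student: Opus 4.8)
\noindent The plan is to reproduce the proof of Theorem~\ref{T3}, but carried out inside the generating function $1/(q;q^2)_\infty=\sum_{n\ge0}p_o(n)q^n$ of partitions into odd parts rather than $1/(q;q)_\infty$, and to keep only odd powers of $q$ at the end. Write $\Psi_\nu(q):=\sum_{\tau=0}^{\nu}(-1)^\tau q^{\tau(2\tau+1)}-\sum_{\tau=0}^{\nu-1}(-1)^\tau q^{(\tau+1)(2\tau+1)}$. A short computation identifies the exponents occurring here as the triangular numbers $\tau(\tau+1)/2$, $0\le\tau\le 2\nu$, carrying the signs of $\psi(-q)$ in \eqref{psi}, so that $\Psi_\nu(q)=\sum_{\tau=0}^{2\nu-1}(-q)^{\tau(\tau+1)/2}+(-1)^\nu q^{\nu(2\nu+1)}$, i.e.\ the truncated sum in \eqref{G2} together with one stray monomial. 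Since the inner bracket of \eqref{dpi} is exactly the coefficient of $q^{2n+1}$ in $\Psi_\nu(q)/(q;q^2)_\infty$, the theorem asserts that this coefficient equals $(-1)^\nu\bigl(p_o(3,4,\nu;2n+1)+p_o(1,4,\nu;2n+1)\bigr)$ once $n>\nu(2\nu+3)$.

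First I would strip off the complete theta function: by \eqref{psi},
\[
\frac{\psi(-q)}{(q;q^2)_\infty}=\frac{(q^2;q^2)_\infty}{(q;q^2)_\infty(-q;q^2)_\infty}=\frac{(q^2;q^2)_\infty}{(q^2;q^4)_\infty}=(q^4;q^4)_\infty,
\]
which is supported on multiples of $4$ and hence carries no odd coefficients at all. Using this, the truncated Gauss identity \eqref{G2} for $\sum_{\tau=0}^{2\nu-1}(-q)^{\tau(\tau+1)/2}$, and the elementary factorisation $1/(q;q^2)_\infty=(q^4;q^4)_\infty(-q;q^2)_\infty/(q^2;q^2)_\infty$, one obtains
\[
\frac{\Psi_\nu(q)}{(q;q^2)_\infty}=(q^4;q^4)_\infty+(-1)^{\nu-1}(q^4;q^4)_\infty\,E_\nu(q)+(-1)^\nu\frac{q^{\nu(2\nu+1)}}{(q;q^2)_\infty},
\]
where $E_\nu(q)$ is the error sum on the right of \eqref{G2} (a series with nonnegative coefficients; see the revised form of \cite{AM2}). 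The first term is even and drops out, so the remaining two, in the coefficient of $q^{2n+1}$ with $n>\nu(2\nu+3)$, must supply $(-1)^\nu\bigl(p_o(3,4,\nu;2n+1)+p_o(1,4,\nu;2n+1)\bigr)$. By Theorem~\ref{M3} with $m=4$ and $a=3$ (resp.\ $a=1$) one has $\sum_m M(a,4,\nu;m)q^m=q^{(\nu+1)(2\nu+a)}\sum_{\tau\ge0}q^{4\tau(\nu+\tau+1)+a\tau}/\bigl((q^4;q^4)_\tau(q^a;q^4)_{\nu+\tau+1}\bigr)$, and multiplying by the free factor $1/(q^{a'};q^4)_\infty$ ($\{a,a'\}=\{1,3\}$) dictated by the definition of $p_o(a,4,\nu;\cdot)$ produces $\sum_m p_o(a,4,\nu;m)q^m$. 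Hence \eqref{dpi} is equivalent to the $q$-series identity asserting that, in degrees exceeding $2\nu(2\nu+3)$, the odd part of $(-1)^{\nu-1}(q^4;q^4)_\infty E_\nu(q)+(-1)^\nu q^{\nu(2\nu+1)}/(q;q^2)_\infty$ equals $(-1)^\nu\bigl(\sum_m p_o(3,4,\nu;m)q^m+\sum_m p_o(1,4,\nu;m)q^m\bigr)$; granting it, the nonnegativity of \eqref{dpi} is automatic, its right side being a coefficient of a series with nonnegative coefficients.

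Establishing that $q$-series identity is the step I expect to be the main obstacle. I would do it combinatorially, via the dissection that proves Theorem~\ref{M3}: for $a\in\{1,3\}$, cut a partition into odd parts along the $(\nu+2)$-Durfee rectangle of the $4$-modular Young diagram of its parts $\equiv a\pmod4$; the conditions ``every part $\le 4\nu+a$ occurs'' and ``parts below the rectangle are shorter than its width'' turn the $\equiv a$ portion into $q^{(\nu+1)(2\nu+a)}\sum_{\tau\ge0}q^{4\tau(\nu+\tau+1)+a\tau}/\bigl((q^4;q^4)_\tau(q^a;q^4)_{\nu+\tau+1}\bigr)$, while the remaining odd parts (those $\equiv a'\pmod4$) contribute the free factor $1/(q^{a'};q^4)_\infty$. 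It then remains to verify, term by term, that $(q^4;q^4)_\infty E_\nu(q)$ with its even powers deleted, together with the stray contribution $(-1)^\nu q^{\nu(2\nu+1)}/(q;q^2)_\infty$, reproduces exactly this bookkeeping; here the Andrews--Merca telescoping of the truncated Gauss series and the parallel with equation~\eqref{p2i} of Theorem~\ref{T3} (the same scheme run with $1/(q;q)_\infty$ in place of $1/(q;q^2)_\infty$ and the ``other'' parts left unrestricted) are the tools I would lean on.
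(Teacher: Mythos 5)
Your reduction is set up correctly: the identity $\frac{\psi(-q)}{(q;q^2)_\infty}=(q^4;q^4)_\infty$, the identification of $\Psi_\nu(q)$ with the Guo--Zeng truncation of \eqref{psi} plus the monomial $(-1)^\nu q^{\nu(2\nu+1)}$, and the specialization of Theorem \ref{M3} (with the free factor $1/(q^{a'};q^4)_\infty$) to the generating function of $p_o(a,4,\nu;\cdot)$ are all right. But the argument stops exactly where the theorem lives. After discarding the even series $(q^4;q^4)_\infty$, what remains to be shown is that the odd part of
\[
(-1)^{\nu-1}(q^4;q^4)_\infty E_\nu(q)+(-1)^\nu\frac{q^{\nu(2\nu+1)}}{(q;q^2)_\infty}
\]
equals $(-1)^\nu\sum_m\bigl(p_o(3,4,\nu;m)+p_o(1,4,\nu;m)\bigr)q^m$, and you only say you would ``verify term by term'' that this matches the Durfee-rectangle bookkeeping. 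That verification is not routine: $E_\nu(q)$ is the Guo--Zeng $q$-binomial error sum, and converting it into sums of the shape appearing in Theorem \ref{M3} is essentially the nontrivial content of Andrews and Merca's revision of \eqref{G2}; even granting that revision one must still splice in the stray term and extract the odd part. The combinatorial dissection you describe establishes only the interpretation of the right-hand side, i.e.\ Theorem \ref{M3} itself, not its equality with the left-hand side. So there is a genuine gap at the central step.

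The paper avoids this by not invoking \eqref{G2} at all: it works with $\Psi_\nu(q)/\psi(-q)$ directly, writes each of the two infinite tails of the full identity \eqref{psi} as a limiting $_2\phi_1$, and applies Heine's transformation, which produces exactly the two sums $q^{(\nu+1)(2\nu+a)}\frac{(q^a;q^4)_\infty}{\psi(-q)}\sum_{\tau}q^{\tau(4\nu+4\tau+4+a)}/\bigl((q^4;q^4)_\tau(q^a;q^4)_{\nu+\tau+1}\bigr)$ for $a=3,1$ (equation \eqref{eq2}). Multiplying \eqref{eq2} by $(q^4;q^4)_\infty$ --- rather than by $(q^4;q^4)_\infty/(q^2;q^2)_\infty$ as in the proof of Theorem \ref{T3} --- turns $1/\psi(-q)$ into $1/(q;q^2)_\infty$, turns each tail into $\sum_m p_o(a,4,\nu;m)q^m$ via Theorem \ref{M3}, and turns the constant term into the purely even series $(q^4;q^4)_\infty$; comparing coefficients of $q^{2n+1}$ then finishes the proof. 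If you rerun your plan along these lines, the obstacle you flagged disappears.
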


For a fixed $\nu \geq 0$ and $n\geq 0$, $p_{2,4}(a,4,\nu ;n)$ counts partitions of $n$ into the parts $\not\equiv 2\pmod{4}$ in which the parts $\equiv a\pmod{4}$  form a partition counted by $M(a,4,\nu ;n-\alpha)$, $\alpha$ is the sum of parts $\not\equiv a\pmod{4}$.

\begin{theorem}\label{T5}
	For $\nu \geq 1$ and $n> \nu (2\nu +3)$,
	\begin{align}\label{podi}
	\nonumber	(-1)^\nu &\left(\sum_{\tau =0}^{\nu }(-1)^\tau p_{2,4}\left(n-\tau (2\tau +1)\right)-\sum_{\tau =0}^{\nu -1}(-1)^\tau p_{2,4}\left(n-(\tau +1)(2\tau +1)\right)\right)\\&=p_{2,4}(3,4,\nu ;n)+p_{2,4}(1,4,\nu ;n).
	\end{align}
	
\end{theorem}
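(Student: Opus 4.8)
The plan is to pass to generating functions, recognise the two sides of \eqref{podi} as a truncation of Gauss' identity \eqref{psi} and as the generating functions produced by Theorem~\ref{M3}, and then verify the resulting formal power series identity using the truncated Gauss identity \eqref{G2}.

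First I would record the two generating functions. Since the parts counted by $p_{2,4}(n)$ are exactly those in the residue classes $0,1,3\pmod 4$,
\[
\sum_{n\ge 0}p_{2,4}(n)q^n=\frac{1}{(q;q^4)_\infty(q^3;q^4)_\infty(q^4;q^4)_\infty}=\frac{(-q;q^2)_\infty}{(q^2;q^2)_\infty}=\frac{1}{\psi(-q)},
\]
and $\psi(-q)=(q;q^2)_\infty(q^4;q^4)_\infty=(q;q^4)_\infty(q^3;q^4)_\infty(q^4;q^4)_\infty$. Writing $S_\nu(q):=\sum_{\tau=0}^{\nu}(-1)^\tau q^{\tau(2\tau+1)}-\sum_{\tau=0}^{\nu-1}(-1)^\tau q^{(\tau+1)(2\tau+1)}$, the bracketed sum on the left of \eqref{podi} is the coefficient of $q^n$ in $S_\nu(q)/\psi(-q)$. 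On the other side, Theorem~\ref{M3} with $(a,m)=(3,4)$ and with $(a,m)=(1,4)$, together with the definition of $p_{2,4}(a,4,\nu;n)$ — the parts $\equiv a\pmod 4$ form a partition counted by $M(a,4,\nu;\cdot)$ while the parts in the two other admissible classes modulo $4$ are free — gives
\[
\sum_{n\ge 0}p_{2,4}(3,4,\nu;n)q^n=\frac{q^{(\nu+1)(2\nu+3)}}{(q;q^4)_\infty(q^4;q^4)_\infty}\sum_{\tau\ge 0}\frac{q^{\tau(4\nu+4\tau+7)}}{(q^4;q^4)_\tau(q^3;q^4)_{\nu+\tau+1}},
\]
\[
\sum_{n\ge 0}p_{2,4}(1,4,\nu;n)q^n=\frac{q^{(\nu+1)(2\nu+1)}}{(q^3;q^4)_\infty(q^4;q^4)_\infty}\sum_{\tau\ge 0}\frac{q^{\tau(4\nu+4\tau+5)}}{(q^4;q^4)_\tau(q;q^4)_{\nu+\tau+1}}.
\]
Both right-hand sides have $q$-order at least $(\nu+1)(2\nu+1)=\nu(2\nu+3)+1$, which is where the hypothesis $n>\nu(2\nu+3)$ enters and which shows the bound is sharp.

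It then suffices to prove the formal power series identity
\[
\frac{S_\nu(q)}{\psi(-q)}=1+(-1)^\nu\Bigl(\sum_{n\ge 0}p_{2,4}(3,4,\nu;n)q^n+\sum_{n\ge 0}p_{2,4}(1,4,\nu;n)q^n\Bigr),
\]
for then, comparing coefficients of $q^n$ with $n>\nu(2\nu+3)$, the constant term $1$ drops out and the sign $(-1)^\nu$ of \eqref{podi} cancels the one produced here — which is exactly \eqref{podi}. Clearing the denominator $\psi(-q)=(q;q^4)_\infty(q^3;q^4)_\infty(q^4;q^4)_\infty$ and writing $M^{(a)}(q)=\sum_n M(a,4,\nu;n)q^n$, this identity is equivalent to
\[
(-1)^{\nu+1}\bigl(\psi(-q)-S_\nu(q)\bigr)=(q^3;q^4)_\infty\,M^{(3)}(q)+(q;q^4)_\infty\,M^{(1)}(q),
\]
a genuine $q$-series identity between two explicitly known series.

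To prove the last identity I would start from \eqref{G2}. Since $S_\nu(q)$ is the Guo--Zeng truncation $\sum_{\tau=0}^{\nu-1}(-1)^\tau q^{\tau(2\tau+1)}(1-q^{2\tau+1})$ plus the single extra term $(-1)^\nu q^{\nu(2\nu+1)}$, \eqref{G2} rewrites $(-1)^{\nu+1}\bigl(\psi(-q)-S_\nu(q)\bigr)$ as $q^{\nu(2\nu+1)}$ minus $\psi(-q)$ times the $q$-binomial series on the right of \eqref{G2}. One then expands $\psi(-q)$ through $(q^2;q^2)_\infty/(-q;q^2)_\infty$, splits the factors $(q;q^2)$ and $(-q;q^2)$ into the residue classes $1$ and $3$ modulo $4$, and re-sums the two resulting basic hypergeometric tails into the two series displayed above. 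I expect this re-summation to be the main obstacle: in effect it is the $(\nu+2)$-Durfee-rectangle dissection underlying Theorem~\ref{M3}, now carried out for the truncated series $\psi(-q)-S_\nu(q)$, and one may equally well establish the identity directly by that dissection rather than routing it through \eqref{G2}. Finally, I would observe that the same power series identity is, on its odd-degree part, precisely \eqref{dpi} of Theorem~\ref{T4}; hence \eqref{podi} can alternatively be deduced from Theorem~\ref{T4} by convolving with $1/(q^4;q^4)_\infty=\sum_{k\ge 0}p(k)q^{4k}$, the even-degree values of the $p_o$-alternating sum then contributing only terms of $(q^4;q^4)_\infty$ whose total convolution is the invisible constant $1$ — although the direct argument above seems the cleaner route.
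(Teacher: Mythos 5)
Your reduction is exactly the paper's: pass to generating functions, note that $\sum_n p_{2,4}(n)q^n=1/\psi(-q)$, use Theorem~\ref{M3} with $(a,m)=(3,4)$ and $(1,4)$ to write $\sum_n p_{2,4}(a,4,\nu;n)q^n$ as $M^{(a)}(q)$ times the generating function for the unrestricted residue classes, and reduce \eqref{podi} to the single power-series identity $S_\nu(q)/\psi(-q)=1+(-1)^\nu\bigl(\sum_n p_{2,4}(3,4,\nu;n)q^n+\sum_n p_{2,4}(1,4,\nu;n)q^n\bigr)$. All of that is correct, including the exponents $(\nu+1)(2\nu+3)=3+\nu(2\nu+5)$ and $(\nu+1)(2\nu+1)=1+\nu(2\nu+3)$ and the observation about the $q$-order.

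The problem is that this power-series identity \emph{is} the theorem, and you do not prove it: you defer the re-summation of the tails and explicitly call it ``the main obstacle.'' In the paper this identity is equation \eqref{eq2}, already established in the proof of Theorem \ref{T3}: one writes $\psi(-q)-S_\nu(q)$ as the two tails $\sum_{\tau\ge\nu+1}(-1)^\tau q^{\tau(2\tau+1)}$ and $\sum_{\tau\ge\nu}(-1)^\tau q^{(\tau+1)(2\tau+1)}$ of the bilateral form of \eqref{psi}, recognizes each as $\lim_{\delta\to 0}\ {}_2\phi_1\bigl(q^4,q^{4\nu+c}/\delta;0;q^4,\delta\bigr)$ for $c=7$ and $c=5$, and applies the second Heine transformation to convert each tail into $(q^a;q^4)_\infty$ times exactly the series appearing in Theorem~\ref{M3}; the proof of Theorem~\ref{T5} is then one line. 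Your proposed detour through \eqref{G2} is a genuinely different (and harder) route: Guo--Zeng's right-hand side is a double sum involving $(-q;q^2)_\nu(-q;q^2)_{n-\nu}$ and a $q^2$-binomial coefficient, and matching that with the single Durfee-type sum of Theorem~\ref{M3} is at least as much work as deriving \eqref{eq2} directly --- so nothing is gained by invoking \eqref{G2}. Your closing alternative (deducing \eqref{podi} from Theorem~\ref{T4} by convolving with $1/(q^4;q^4)_\infty$) is also incomplete as stated, since Theorem~\ref{T4} only controls the odd-degree coefficients and you would still need the even-degree information. To complete your argument, carry out the ${}_2\phi_1$/Heine computation for the two tails; everything else in your write-up then goes through.
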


\begin{theorem}\label{T6}
	For $\nu \geq 1$ and $n\geq (\nu +1)^2$,
	\begin{equation}\label{oppi}
	(-1)^\nu \left(\overline{p}(n)+2\sum_{\tau =1}^{\nu }(-1)^\tau \overline{p}(n-\tau ^2)\right)=2 \overline{p}(1,2,\nu ;n),
	\end{equation}
	where $\overline{p}(1,2,\nu ;n)$ counts overpartitions of $n$ in which the non-overlined odd parts form a partition counted by $M(1,2,\nu ;n-\alpha)$, $\alpha$ is the sum of parts other than non-overlined odd parts.
\end{theorem}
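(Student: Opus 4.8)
The plan is to reduce \eqref{oppi} to a single identity of formal power series. Since $\frac{(-q;q)_\infty}{(q;q)_\infty}=\sum_{n\geq 0}\overline{p}(n)q^n$, multiplying the left side of \eqref{oppi} by $q^n$ and summing gives the generating function $(-1)^\nu\frac{(-q;q)_\infty}{(q;q)_\infty}\bigl(1+2\sum_{\tau=1}^{\nu}(-1)^\tau q^{\tau^2}\bigr)$; I would show that this equals $(-1)^\nu+2\sum_{n\geq 0}\overline{p}(1,2,\nu;n)q^n$, which yields \eqref{oppi} on comparing coefficients of $q^n$ (the constant $(-1)^\nu$ only affects $n=0$, and both sides vanish in the range $1\leq n<(\nu+1)^2$, which is precisely the threshold in the statement).

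First I would compute the right-hand generating function. By the definition of $\overline{p}(1,2,\nu;n)$, such an overpartition is assembled from three independent pieces: its overlined parts (distinct positive integers, generating function $(-q;q)_\infty$), its non-overlined even parts (an arbitrary partition into even parts, generating function $1/(q^2;q^2)_\infty$), and its non-overlined odd parts, which must form a partition enumerated by $M(1,2,\nu;\cdot)$. Hence, using $(-q;q)_\infty(q;q)_\infty=(q^2;q^2)_\infty$,
\[
\sum_{n\geq 0}\overline{p}(1,2,\nu;n)q^n=(-q;q)_\infty\cdot\frac{1}{(q^2;q^2)_\infty}\cdot\sum_{j\geq 0}M(1,2,\nu;j)q^j=\frac{1}{(q;q)_\infty}\sum_{j\geq 0}M(1,2,\nu;j)q^j,
\]
and by Theorem~\ref{M3} with $a=1$, $m=2$ the last sum is $q^{(\nu+1)^2}\sum_{\tau\geq 0}\frac{q^{\tau(2\nu+2\tau+3)}}{(q^2;q^2)_\tau(q;q^2)_{\nu+\tau+1}}$. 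For the left side I would peel the tail off Gauss's identity \eqref{phi}: from $1+2\sum_{\tau\geq 1}(-1)^\tau q^{\tau^2}=\frac{(q;q)_\infty}{(-q;q)_\infty}$ one gets $1+2\sum_{\tau=1}^{\nu}(-1)^\tau q^{\tau^2}=\frac{(q;q)_\infty}{(-q;q)_\infty}+2(-1)^\nu\sum_{j\geq 0}(-1)^j q^{(\nu+1+j)^2}$, so multiplying by $(-1)^\nu\frac{(-q;q)_\infty}{(q;q)_\infty}$ produces $(-1)^\nu+2\frac{(-q;q)_\infty}{(q;q)_\infty}\sum_{j\geq 0}(-1)^j q^{(\nu+1+j)^2}$. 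Comparing the two expressions, the theorem reduces to the pure $q$-series identity
\[
q^{(\nu+1)^2}\sum_{\tau\geq 0}\frac{q^{\tau(2\nu+2\tau+3)}}{(q^2;q^2)_\tau(q;q^2)_{\nu+\tau+1}}=(-q;q)_\infty\sum_{j\geq 0}(-1)^j q^{(\nu+1+j)^2},
\]
and establishing this is the step I expect to be the main obstacle.

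To prove it I would set $k=\nu+1$ and multiply both sides by $(q;q^2)_\infty=1/(-q;q)_\infty$; since $(q;q^2)_\infty/(q;q^2)_{k+\tau}=(q^{2k+2\tau+1};q^2)_\infty$, the identity becomes $q^{k^2}\sum_{\tau\geq 0}\frac{q^{\tau(2k+2\tau+1)}(q^{2k+2\tau+1};q^2)_\infty}{(q^2;q^2)_\tau}=\sum_{j\geq 0}(-1)^j q^{(k+j)^2}$. Now expand each infinite product on the left by Euler's identity $(z;q^2)_\infty=\sum_{j\geq 0}\frac{(-1)^j q^{j(j-1)}}{(q^2;q^2)_j}z^j$ with $z=q^{2k+2\tau+1}$, obtaining a double sum over $\tau,j\geq 0$; a direct computation shows that on setting $m=\tau+j$ the total exponent of $q$ collapses to $(k+m)^2+\tau(\tau+1)$, so, writing $\frac{1}{(q^2;q^2)_\tau(q^2;q^2)_{m-\tau}}=\frac{1}{(q^2;q^2)_m}\left[\begin{array}{c}m\\ \tau\end{array}\right]_{q^2}$, the left side equals
\[
\sum_{m\geq 0}(-1)^m q^{(k+m)^2}\frac{1}{(q^2;q^2)_m}\sum_{\tau=0}^{m}(-1)^\tau q^{\tau(\tau+1)}\left[\begin{array}{c}m\\ \tau\end{array}\right]_{q^2}.
\]
Finally I would invoke the finite $q$-binomial theorem in the form $\sum_{\tau=0}^{m}(-1)^\tau q^{\tau(\tau+1)}\left[\begin{array}{c}m\\ \tau\end{array}\right]_{q^2}=(q^2;q^2)_m$, which cancels the factor $1/(q^2;q^2)_m$ and leaves exactly $\sum_{m\geq 0}(-1)^m q^{(k+m)^2}$, completing the argument. (The factor $2$ in \eqref{oppi} is simply inherited from the coefficient $2$ in Gauss's theta series.)
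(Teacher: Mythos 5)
Your proposal is correct, and while it shares the paper's outer skeleton (peel the tail off Gauss's identity \eqref{phi}, assemble the right-hand generating function as $(-q;q)_\infty\cdot(q^2;q^2)_\infty^{-1}\cdot\sum_j M(1,2,\nu;j)q^j=(q;q)_\infty^{-1}\sum_j M(1,2,\nu;j)q^j$ via Theorem \ref{M3}), it handles the central analytic step in a genuinely different way. The paper converts the theta tail $\sum_{\tau\geq 0}(-1)^\tau q^{\tau^2+2\tau(\nu+1)}$ into the quadratic-exponent sum $\sum_{\tau}q^{\tau(2\nu+2\tau+3)}/\bigl((q^2;q^2)_\tau(q;q^2)_{\nu+\tau+1}\bigr)$ by writing it as $\lim_{\delta\to 0}{}_2\phi_1\bigl(q^2,q^{2\nu+3}/\delta;0;q^2,\delta\bigr)$ and applying Heine's second transformation; you instead work backwards from the target, expanding $(q^{2k+2\tau+1};q^2)_\infty$ by Euler's series, diagonalizing with $m=\tau+j$ (your exponent computation $(k+m)^2+\tau(\tau+1)$ and sign $(-1)^j=(-1)^m(-1)^\tau$ both check out), and collapsing the inner sum with $\sum_{\tau=0}^{m}(-1)^\tau q^{\tau(\tau+1)}\left[\begin{array}{c}m\\ \tau\end{array}\right]_{q^2}=(q^2;q^2)_m$, which is the finite $q$-binomial theorem at $z=-q^2$ in base $q^2$. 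What your route buys is self-containedness: it needs only Euler's expansion and the $q$-binomial theorem rather than the ${}_2\phi_1$ machinery and a limiting case of Heine's transformation, at the cost of having to guess the closed form in advance; the paper's route derives the closed form directly and reuses the identical ${}_2\phi_1$ template across Theorems \ref{T2}--\ref{T8}, which is why the authors prefer it. Your bookkeeping of the constant term $(-1)^\nu$ and the vanishing of both sides for $1\leq n<(\nu+1)^2$ is also consistent with the threshold $n\geq(\nu+1)^2$ in the statement.
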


\begin{theorem}\label{T7}
	For $\nu \geq 1$ and $n\geq (\nu +1)^2$,
	\begin{equation}\label{ppi}
	(-1)^\nu \left(pp(2n)+2\sum_{\tau =1}^{\nu }(-1)^\tau  pp(2n-\tau ^2)\right)=(-1)^\nu p(n)+2pp_e(2n)
	\end{equation}
	and
	\begin{equation}\label{ppi1}
	(-1)^\nu \left(pp(2n+1)+2\sum_{\tau =1}^{\nu }(-1)^\tau  pp(2n+1-\tau ^2)\right)=2pp_e(2n+1),
	\end{equation}
	where $pp_e(n)$ counts bipartitions $(\pi_1,\pi_2)$ such that $\pi_1$ is a unrestricted partition and $\pi_2$ is a partition in which odd parts are counted by $M(1,2,\nu ;n-|\pi_1|-\alpha)$, $|\pi_1|$ is the sum of parts of $\pi_1$ and $\alpha$ is the sum of even parts of $\pi_2$.
\end{theorem}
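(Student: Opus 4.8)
The plan is to establish \eqref{ppi} and \eqref{ppi1} together on the level of generating functions, using the Guo--Zeng truncated Gauss identity \eqref{G1}. Since $pp(n)$ counts bipartitions of $n$, one has $\sum_{n\geq 0}pp(n)q^n=1/(q;q)_\infty^{2}$, so both left-hand sides equal $(-1)^\nu$ times the coefficient of $q^N$ (with $N=2n$ for \eqref{ppi} and $N=2n+1$ for \eqref{ppi1}) in
\[
\frac{1}{(q;q)_\infty^{2}}\left(1+2\sum_{\tau=1}^{\nu}(-1)^\tau q^{\tau^2}\right).
\]
Using $(q;q)_\infty(-q;q)_\infty=(q^2;q^2)_\infty$ I would factor $\dfrac{1}{(q;q)_\infty^{2}}=\dfrac{1}{(q^2;q^2)_\infty}\cdot\dfrac{(-q;q)_\infty}{(q;q)_\infty}$ and apply \eqref{G1} to the last factor. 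Writing $T_\nu(q)$ for the series occurring on the right of \eqref{G1} (so that its right-hand side is $1+(-1)^\nu T_\nu(q)$), this yields
\[
\frac{1}{(q;q)_\infty^{2}}\left(1+2\sum_{\tau=1}^{\nu}(-1)^\tau q^{\tau^2}\right)=\frac{1}{(q^2;q^2)_\infty}+\frac{(-1)^\nu}{(q^2;q^2)_\infty}\,T_\nu(q).
\]
After multiplying by $(-1)^\nu$, the first term becomes $(-1)^\nu/(q^2;q^2)_\infty=(-1)^\nu\sum_{m\geq 0}p(m)q^{2m}$, which contributes $(-1)^\nu p(n)$ to the coefficient of $q^{2n}$ and nothing to that of $q^{2n+1}$; this is exactly the extra summand $(-1)^\nu p(n)$ that appears in \eqref{ppi} but not in \eqref{ppi1}.

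It remains to show that $\dfrac{1}{(q^2;q^2)_\infty}\,T_\nu(q)=2\sum_{N\geq 0}pp_e(N)q^N$. Theorem \ref{T6} gives $T_\nu(q)=2\sum_{n\geq 0}\overline{p}(1,2,\nu;n)q^n$ (it is stated there for $n\geq(\nu+1)^2$, and both sides vanish for smaller $n$). Splitting an overpartition into its overlined parts (distinct, contributing $(-q;q)_\infty$), its non-overlined even parts (contributing $1/(q^2;q^2)_\infty$), and its non-overlined odd parts --- which by the definition of $\overline{p}(1,2,\nu;\cdot)$ form a partition counted by $M(1,2,\nu;\cdot)$, with generating function supplied by Theorem \ref{M3} at $a=1$, $m=2$ --- one gets
\[
T_\nu(q)=\frac{2(-q;q)_\infty}{(q^2;q^2)_\infty}\sum_{m\geq 0}M(1,2,\nu;m)q^m=\frac{2}{(q;q)_\infty}\sum_{m\geq 0}M(1,2,\nu;m)q^m.
\]
Dividing by $(q^2;q^2)_\infty$ and comparing with the definition of $pp_e$ --- a bipartition $(\pi_1,\pi_2)$ in which $\pi_1$ is arbitrary (factor $1/(q;q)_\infty$), the even parts of $\pi_2$ are arbitrary (factor $1/(q^2;q^2)_\infty$), and the odd parts of $\pi_2$ are counted by $M(1,2,\nu;\cdot)$ (factor $\sum_m M(1,2,\nu;m)q^m$) --- identifies $\dfrac{1}{(q^2;q^2)_\infty}\,T_\nu(q)$ with $2\sum_{N\geq 0}pp_e(N)q^N$. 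Reading off the coefficients of $q^{2n}$ and $q^{2n+1}$ in $(-1)^\nu\sum_{m\geq 0}p(m)q^{2m}+2\sum_{N\geq 0}pp_e(N)q^N$ then produces \eqref{ppi} and \eqref{ppi1}.

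The crux of the argument is the identity $T_\nu(q)=\dfrac{2}{(q;q)_\infty}\sum_{m\geq 0}M(1,2,\nu;m)q^m$, that is, the matching of the Guo--Zeng truncation tail of \eqref{G1} with the generating function of $M(1,2,\nu;\cdot)$ from Theorem \ref{M3} at $(a,m)=(1,2)$; this is precisely the analytic content already required for Theorem \ref{T6}, which I would invoke directly. Granting it, the rest --- the product identity $(q;q)_\infty(-q;q)_\infty=(q^2;q^2)_\infty$, the fact that $1/(q^2;q^2)_\infty$ is supported on even powers of $q$, recognizing $pp_e$ inside $\dfrac{1}{(q;q)_\infty(q^2;q^2)_\infty}\sum_m M(1,2,\nu;m)q^m$, and the even/odd dissection --- is routine bookkeeping entirely parallel to the preceding theorems of this section.
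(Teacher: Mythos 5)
Your proposal is correct and follows essentially the same route as the paper: the paper multiplies its intermediate identity (the $\nu$-truncated expansion of $1/\phi(-q)$, whose tail Theorem \ref{M3} identifies as $\tfrac{2(-1)^\nu}{(q;q)_\infty}\sum_m M(1,2,\nu;m)q^m$) by $1/(q^2;q^2)_\infty$, recognizes $2\sum_N pp_e(N)q^N$ in the product, and dissects even and odd powers exactly as you do. Your only deviation is cosmetic --- you recover that tail by citing \eqref{G1} and the combinatorial statement of Theorem \ref{T6} plus the overpartition factorization, rather than the paper's explicit $q$-series form, but the analytic content is identical.
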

\begin{theorem}\label{T8}
For each $n\geq 1$,
\begin{equation}\label{eq8}
p(n)=p(2,3,0;n)+p(1,3,0;n)
\end{equation}
and
\begin{equation}\label{eq9}
p_{2,4}(n)=p_{2,4}(3,4,0;n)+p_{2,4}(1,4,0;n).
\end{equation}

\end{theorem}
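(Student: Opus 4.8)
The plan is to pass to generating functions and reduce each of \eqref{eq8} and \eqref{eq9} to a single $q$-series identity, which then collapses via the $q$-binomial theorem.

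Fix $a\in\{1,2\}$. A partition is counted by $p(a,3,0;n)$ exactly when its parts $\equiv a\pmod 3$ form a partition enumerated by $M(a,3,0;\cdot)$ while the remaining parts are arbitrary; since the partitions with no part $\equiv a\pmod 3$ are generated by $(q^a;q^3)_\infty/(q;q)_\infty$, Theorem~\ref{M3} at $\nu=0$ gives
\[
\sum_{n\ge0}p(a,3,0;n)\,q^n=\frac{q^{a}}{(q;q)_\infty}\sum_{\tau\ge0}\frac{q^{\tau(3\tau+3+a)}\,(q^a;q^3)_\infty}{(q^3;q^3)_\tau\,(q^a;q^3)_{\tau+1}}=\frac{q^{a}}{(q;q)_\infty}\sum_{\tau\ge0}\frac{q^{\tau(3\tau+3+a)}\,(q^{a+3\tau+3};q^3)_\infty}{(q^3;q^3)_\tau},
\]
where I used $(q^a;q^3)_\infty/(q^a;q^3)_{\tau+1}=(q^{a+3\tau+3};q^3)_\infty$. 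Summing over $a\in\{1,2\}$ and using $1/(q;q)_\infty-1=\sum_{n\ge1}p(n)q^n$, identity \eqref{eq8} becomes equivalent to
\[
q\sum_{\tau\ge0}\frac{q^{\tau(3\tau+4)}(q^{3\tau+4};q^3)_\infty}{(q^3;q^3)_\tau}+q^{2}\sum_{\tau\ge0}\frac{q^{\tau(3\tau+5)}(q^{3\tau+5};q^3)_\infty}{(q^3;q^3)_\tau}=1-(q;q)_\infty .
\]
The analogous computation for $p_{2,4}$ — where one works inside the generating function $(-q;q^2)_\infty/(q^2;q^2)_\infty$ of partitions into parts $\not\equiv2\pmod4$, and removing the residue $a\pmod4$ multiplies by $(q^a;q^4)_\infty$ — turns \eqref{eq9} into
\[
q\sum_{\tau\ge0}\frac{q^{\tau(4\tau+5)}(q^{4\tau+5};q^4)_\infty}{(q^4;q^4)_\tau}+q^{3}\sum_{\tau\ge0}\frac{q^{\tau(4\tau+7)}(q^{4\tau+7};q^4)_\infty}{(q^4;q^4)_\tau}=1-\psi(-q),
\]
with $\psi(-q)=(q^2;q^2)_\infty/(-q;q^2)_\infty$ from \eqref{psi}.

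Both displayed identities are instances — for $m=3,4$ and the two residues $r$ — of the following. In the sum $q^{r}\sum_{\tau\ge0}q^{\tau(m\tau+m+r)}(q^{m\tau+m+r};q^m)_\infty/(q^m;q^m)_\tau$, expand the tail by Euler's identity $(z;Q)_\infty=\sum_{j\ge0}(-1)^jQ^{\binom j2}z^j/(Q;Q)_j$ with $Q=q^m$; one gets a double sum over $\tau,j\ge0$ whose exponent of $q$ is $m\binom{\tau+1}{2}$ plus an expression in $s:=\tau+j$ alone. Reindexing by $s$, the sum over $\tau$ (carrying its sign $(-1)^{s-\tau}$) is $(-1)^s\,(q^m;q^m)_s^{-1}\sum_{\tau=0}^{s}(-1)^\tau q^{m\binom{\tau}{2}+m\tau}(q^m;q^m)_s/\bigl((q^m;q^m)_\tau(q^m;q^m)_{s-\tau}\bigr)=(-1)^s$ by the finite $q$-binomial theorem (argument $z=q^m$). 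What is left is $\sum_{s\ge0}(-1)^sq^{\,r+m\binom s2+(m+r)s}$, and adding the two values of $r$ yields, for $m=3$, $\sum_{s\ge0}(-1)^s\bigl(q^{s(3s+5)/2+1}+q^{s(3s+7)/2+2}\bigr)=1-(q;q)_\infty$ by Euler's pentagonal number theorem, and for $m=4$, $\sum_{s\ge0}(-1)^s\bigl(q^{(s+1)(2s+1)}+q^{(s+1)(2s+3)}\bigr)=1-\psi(-q)$ by \eqref{psi}. Multiplying back by $1/(q;q)_\infty$, respectively by $(-q;q^2)_\infty/(q^2;q^2)_\infty$, and comparing coefficients of $q^n$ for $n\ge1$ proves \eqref{eq8} and \eqref{eq9}.

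The work with the infinite products is routine; the one step that is not automatic is recognizing the reindexing $s=\tau+j$ that makes the inner sum collapse to $\pm1$ by the $q$-binomial theorem — once that is in place, all that remains is to identify the surviving exponents as generalized pentagonal and triangular numbers. (Alternatively, \eqref{eq8} and \eqref{eq9} are the $\nu=0$ cases of the generating-function identities underlying Theorems~\ref{T2} and~\ref{T5}; the only extra point is the exclusion of $n=0$, which is why the hypothesis here is $n\ge1$.)
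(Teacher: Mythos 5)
Your proof is correct, but it runs in the opposite direction from the paper's and uses different machinery at the key step. The paper starts from the pentagonal number theorem in the form \eqref{eqt1} (respectively Gauss's identity \eqref{psi}), writes the two tail sums as limits of ${}_2\phi_1$ series, and applies Heine's transformation to convert them into exactly the shape of Theorem~\ref{M3} at $\nu=0$ multiplied by $(q^a;q^m)_\infty/(q;q)_\infty$ --- i.e.\ it is literally the $\nu=0$ instance of the computation behind Theorems~\ref{T2} and~\ref{T5}, which is the alternative you mention in your closing parenthesis. You instead begin from the generating function of $p(a,m,0;n)$ supplied by Theorem~\ref{M3}, clear the infinite products, expand $(q^{m\tau+m+a};q^m)_\infty$ by Euler's identity, and collapse the resulting double sum via the substitution $s=\tau+j$ and the finite $q$-binomial theorem, so that the surviving single sum is recognized as $1-(q;q)_\infty$ (pentagonal numbers) or $1-\psi(-q)$ (the exponents $(s+1)(2s+1)$ and $(s+1)(2s+3)$ from \eqref{psi}); I checked the exponent bookkeeping, including $m\binom{\tau+1}{2}+m\binom{s}{2}+(m+a)s$ and the evaluation of the inner sum to $1$, and it is right. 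What your route buys is self-containedness --- only Euler's expansion and the finite $q$-binomial theorem are needed, with no appeal to Heine's transformation --- at the cost of having to ``guess'' the answer and verify it, whereas the paper's forward derivation produces the right-hand side of \eqref{eq8} and \eqref{eq9} mechanically from the classical theta identities. Both treatments handle the $n\ge 1$ restriction identically (the constant term $1$ is the only discrepancy, since $M(a,m,0;0)=0$).
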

\begin{theorem}\label{CP}
	For $\nu \geq 1$ and $n\geq 0$,
	\begin{equation}\label{CP1}
	\sum_{\tau =0}^{\infty}(-1)^\tau p\left(n-\dfrac{(\nu +\tau )(\nu +\tau -1)}{2}\right)=N_\nu (n).
	\end{equation}
\end{theorem}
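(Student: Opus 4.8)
The plan is to pass to generating functions, use Theorem~\ref{Nk} to rewrite the right side, and reduce Theorem~\ref{CP} to a $q$-series identity that I can dispatch by a self-cancelling telescoping in $\nu$. Writing the left side of \eqref{CP1} as $\sum_{n\ge 0}\Big(\sum_{\tau\ge 0}(-1)^\tau p\big(n-(\nu+\tau)(\nu+\tau-1)/2\big)\Big)q^n$ and using $\sum_{n\ge 0}p(n)q^n=1/(q;q)_\infty$, the left side is the coefficient sequence of $\dfrac{1}{(q;q)_\infty}\,S_\nu(q)$, where $S_\nu(q):=\sum_{\tau\ge 0}(-1)^\tau q^{\binom{\nu+\tau}{2}}$; by Theorem~\ref{Nk} the right side is the coefficient sequence of $R_\nu(q):=q^{\binom{\nu}{2}}\sum_{\tau\ge 0}\dfrac{q^{\tau(\nu+\tau)}}{(q;q)_\tau(q;q)_{\nu+\tau-1}}$. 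So it suffices to prove $(q;q)_\infty R_\nu(q)=S_\nu(q)$ for every $\nu\ge 1$. All the series here are well-defined formal power series, since the exponents $\binom{\nu+\tau}{2}$ and $\tau(\nu+\tau)$ strictly increase in $\tau$, so the manipulations below and the limit $k\to\infty$ are legitimate.

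Next I would establish two recursions. Re-indexing $\tau\mapsto\tau+1$ in $S_{\nu+1}(q)$ gives at once
\[
S_\nu(q)+S_{\nu+1}(q)=q^{\binom{\nu}{2}}.
\]
For $R_\nu(q)$ I would use $\binom{\nu+1}{2}=\binom{\nu}{2}+\nu$ and the exponent identity $q^{\nu}q^{\tau(\nu+1+\tau)}=q^{(\tau+1)(\nu+\tau)}$ to rewrite $R_{\nu+1}(q)$, shift its summation index by one, and merge with $R_\nu(q)$ over a common index $\sigma$ via $1/(q;q)_{\sigma-1}=(1-q^{\sigma})/(q;q)_\sigma$; the numerators collapse by $q^{\sigma(\nu+\sigma)}+(1-q^{\sigma})q^{\sigma(\nu+\sigma-1)}=q^{\sigma(\nu+\sigma-1)}$, and hence
\[
R_\nu(q)+R_{\nu+1}(q)=q^{\binom{\nu}{2}}\sum_{\sigma\ge 0}\frac{q^{\sigma(\sigma+\nu-1)}}{(q;q)_\sigma(q;q)_{\sigma+\nu-1}}=\frac{q^{\binom{\nu}{2}}}{(q;q)_\infty},
\]
the last step being the classical Durfee-rectangle dissection of $1/(q;q)_\infty$ (valid since $\nu-1\ge 0$) — the same dissection underlying Theorem~\ref{Nk}, now applied to unrestricted partitions.

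Finally I would telescope. Put $D_\nu(q):=(q;q)_\infty R_\nu(q)-S_\nu(q)$. Subtracting the two recursions gives $D_\nu(q)+D_{\nu+1}(q)=0$, so $D_{\nu+2k}(q)=D_\nu(q)$ for all $k\ge 0$. On the other hand every partition counted by $N_\mu$ contains each of $1,2,\dots,\mu-1$, so $R_\mu(q)$ — hence also $(q;q)_\infty R_\mu(q)$ — and $S_\mu(q)$ all lie in $q^{\mu(\mu-1)/2}\,\mathbb{Z}[[q]]$; therefore $D_\mu(q)$ has no term of degree below $\binom{\mu}{2}$. Given any $d\ge 0$, choosing $k$ with $\binom{\nu+2k}{2}>d$ forces $[q^d]D_\nu(q)=[q^d]D_{\nu+2k}(q)=0$. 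Hence $D_\nu\equiv 0$, and equating coefficients of $q^n$ gives \eqref{CP1}.

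I expect the one genuine computation to be the collapse in the second step — keeping track of $\tau(\nu+\tau)$ versus $(\tau+1)(\nu+\tau)$ under the index shift and verifying the one-line cancellation $q^{\sigma(\nu+\sigma)}+(1-q^{\sigma})q^{\sigma(\nu+\sigma-1)}=q^{\sigma(\nu+\sigma-1)}$ — and this is the main obstacle; the generating-function reduction, the easy $S$-recursion, and the order-of-vanishing bookkeeping are all routine.
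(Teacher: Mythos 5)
Your proposal is correct and follows essentially the same route as the paper: the key identity $R_\nu(q)+R_{\nu+1}(q)=q^{\binom{\nu}{2}}/(q;q)_\infty$ (obtained via $1/(q;q)_{\nu+\tau-1}=(1-q^{\nu+\tau})/(q;q)_{\nu+\tau}$ and the Durfee-rectangle dissection of $1/(q;q)_\infty$) is exactly the single-step recursion the paper proves and then iterates in its Lemma, and your parity/order-of-vanishing telescoping is just a repackaging of the paper's ``equate coefficients for $m$ sufficiently large'' conclusion.
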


For example, let $\nu =2$ and $n=10$. Partitions counted by  $N_2(10)$ are $1+1+1+1+1+1+1+1+1+1$, $9+1$, $8+1+1$, $7+2+1$, $7+1+1+1$, $6+2+1+1$, $6+1+1+1+1$, $5+2+2+1$, $5+2+1+1+1$, $5+1+1+1+1+1$, $4+2+2+1+1$, $4+2+1+1+1+1$, $4+1+1+1+1+1+1$, $3+2+2+2+1$, $3+2+2+1+1+1$, $3+2+1+1+1+1+1$, $3+1+1+1+1+1+1+1$, $5+4+1$, $4+4+1+1$. 
Hence, $p(9)-p(7)+p(4)-p(0)=19=N_2(10).$

The following inequality is obvious from the last theorem.
\begin{corollary}\label{CI}
	For $\nu \geq 1$ and $n\geq 0$,
	\begin{equation}
	\sum_{\tau =0}^{\infty}(-1)^\tau p\left(n-\dfrac{(\nu +\tau )(\nu +\tau -1)}{2}\right)\geq 0,
	\end{equation}
	here we assume $p(x)=0$ if $x$ is a negative integer.
\end{corollary}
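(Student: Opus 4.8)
\section*{Proof proposal for Corollary~\ref{CI}}

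The plan is to obtain Corollary~\ref{CI} as an immediate consequence of Theorem~\ref{CP}. First I would note that, for fixed $\nu\geq 1$ and $n\geq 0$, the series on the left of the claimed inequality is in fact a finite sum: as soon as $\tau$ is large enough that $(\nu+\tau)(\nu+\tau-1)/2>n$, every remaining summand vanishes under the convention $p(x)=0$ for $x<0$. This is precisely the convention under which the sum in \eqref{CP1} is to be read, so Theorem~\ref{CP} applies verbatim and gives
\begin{equation*}
\sum_{\tau=0}^{\infty}(-1)^\tau p\!\left(n-\frac{(\nu+\tau)(\nu+\tau-1)}{2}\right)=N_\nu(n).
\end{equation*}
Since $N_\nu(n)$ is, by definition, the number of partitions of $n$ of a prescribed type, it is a nonnegative integer, and the inequality follows.

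A complementary way to see the same thing, which also makes transparent why the alternating cancellation never overshoots, is to pass to generating functions. Combining Theorem~\ref{CP} with the evaluation \eqref{nk1} of Theorem~\ref{Nk} yields the identity
\begin{equation*}
\frac{1}{(q;q)_\infty}\sum_{\tau=0}^{\infty}(-1)^\tau q^{(\nu+\tau)(\nu+\tau-1)/2}=q^{\nu(\nu-1)/2}\sum_{\tau=0}^{\infty}\frac{q^{\tau(\nu+\tau)}}{(q;q)_\tau(q;q)_{\nu+\tau-1}},
\end{equation*}
whose right-hand side is a sum (convergent in the formal power series topology) of series each of which has nonnegative coefficients, since every $1/(q;q)_k$ does. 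Hence the coefficient of $q^n$ on the left, which is exactly the alternating sum in question, is nonnegative for all $n$.

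I do not expect any genuine obstacle in the corollary itself: all of the content resides in Theorem~\ref{CP} (and, behind it, in the $\nu$-Durfee rectangle description underlying Theorems~\ref{Nk} and~\ref{CP}). If one instead wanted a proof of the inequality that did not quote Theorem~\ref{CP}, the work would be to establish that identity directly --- for instance by constructing a sign-reversing involution on the relevant set of partitions whose surviving fixed points are exactly those counted by $N_\nu(n)$, or equivalently by using a Durfee-rectangle dissection to rewrite $\frac{1}{(q;q)_\infty}\sum_{\tau\geq 0}(-1)^\tau q^{(\nu+\tau)(\nu+\tau-1)/2}$ in the manifestly positive form above. That step, not the passage to the inequality, is where any difficulty would lie.
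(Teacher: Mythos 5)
Your first paragraph is exactly the paper's argument: the authors simply observe that the corollary is ``obvious from the last theorem,'' i.e.\ the alternating sum equals $N_\nu(n)$ by Theorem~\ref{CP} and is therefore nonnegative as a count of partitions. Your proposal is correct and takes the same route; the extra generating-function remark is a harmless bonus.
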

To prove our theorems, we rely on Gauss hypergeometric series \cite[Eqn. 1.2.14]{GR}
\begin{equation*}\label{GH}
_2\phi_1\left(\genfrac{}{}{0pt}{}{\alpha,\beta}{\gamma};q,z\right)=\sum_{n=0}^{\infty}\frac{(\alpha;q)_n(\beta;q)_n}{(q;q)_n(\gamma;q)_n}z^n
\end{equation*}
and second identity by Heine's transformation of $_2 \phi_1$ series \cite[Eqn. 1.4.5]{GR},
\begin{equation*}\label{HT}
_2\phi_1\left(\genfrac{}{}{0pt}{}{\alpha,\beta}{\gamma};q,z\right)=\dfrac{(\gamma/\beta;q)_\infty(\beta z;q)_\infty}{(\gamma;q)_\infty(z;q)_\infty}\
_2\phi_1\left(\genfrac{}{}{0pt}{}{\alpha \beta z/\gamma,\beta}{\beta z};q,\gamma/\beta\right).
\end{equation*}
Without explicitly mentioning we frequently use these two identities in our proofs.

\section{Proofs of main theorems}\label{MR}
In this section, we prove Theorems \ref{M3}--\ref{CP}.
\subsection*{Proof of Theorem \ref{M3}}
For a fixed $\nu \geq 0$, the partitions counted by $M(a,m,\nu ;n)$ with $(\nu +2)$-Durfee rectangle, in a $3$-modular Young diagram, of size $\tau \times (\nu +\tau +2) $ are composed of three pieces. The first piece is Durfee rectangle with enumerator $q^{\tau (m\nu +m\tau +m+a)}$, another is the piece right to the Durfee rectangle which are partitions with $\leq \tau$ parts and the last piece under the Durfee rectangle are the partitions into parts $\leq \nu +\tau+1$. It is clear from the definition of $M(a,m,\nu ;n)$ that the parts below and the parts right to the Durfee rectangle are generated by $q^{a+\nu (m\nu +m+2a)/2}/(q^a;q^m)_{\nu +\tau +1}$ and $1/(q^m;q^m)_\tau $, respectively. Here, $q^{a+\nu (m\nu +m+2a)/2}$ accounts for all parts $\equiv a\pmod{m}$ which are $\leq m\nu +a$.

\subsection*{Proof of Theorem \ref{Nk}}
Proof of Theorem \ref{Nk} is similar to that of \ref{M3}, so we omit the details.

\subsection*{Proof of Theorem \ref{T2}}
From the pentagonal number theorem, we have
\begin{align}\label{eqt1}
\nonumber(q;q)_\infty&=\sum_{\tau =0}^{\infty}(-1)^\tau q^{\tau (3\tau +1)/2}+\sum_{\tau =-1}^{-\infty}(-1)^\tau q^{\tau (3\tau +1)/2}\\&
=\sum_{\tau =0}^{\infty}(-1)^\tau q^{\tau (3\tau +1)/2}-\sum_{\tau =0}^{\infty}(-1)^\tau q^{\tau (3\tau +5)/2+1}.
\end{align}
Thus,
\begin{align}\label{eq1}
\nonumber&\dfrac{1}{(q;q)_\infty}\left(\sum_{\tau =0}^{\nu }(-1)^\tau q^{\tau (3\tau +1)/2}-\sum_{\tau =0}^{\nu -1}(-1)^\tau q^{\tau (3\tau +5)/2+1}\right)\\&
\nonumber=1-\dfrac{1}{(q;q)_\infty}\sum_{\tau =\nu +1}^{\infty}(-1)^\tau q^{\tau (3\tau +1)/2}+\dfrac{1}{(q;q)_\infty}\sum_{\tau =\nu }^{\infty}(-1)^\tau q^{\tau (3\tau +5)/2+1}\\&\nonumber
=1+\dfrac{(-1)^\nu q^{2+\nu (3\nu +7)/2}}{(q;q)_\infty}\sum_{\tau =0}^{\infty}(-1)^\tau q^{\tau (3\tau +7)/2+3\nu \tau }+\dfrac{(-1)^\nu q^{1+\nu (3\nu +5)/2}}{(q;q)_\infty}\sum_{\tau =0}^{\infty}(-1)^\tau q^{\tau (3\tau +5)/2+3\nu \tau }\\&
\nonumber=1+\dfrac{(-1)^\nu q^{2+\nu (3\nu +7)/2}}{(q;q)_\infty}\lim_{\delta\rightarrow 0}\ _2\phi_1\left(\genfrac{}{}{0pt}{}{q^3,\dfrac{q^{3\nu +5}}{\delta}}{0};q^3,\delta\right)\\&\quad
\nonumber+\dfrac{(-1)^\nu q^{1+\nu (3\nu +5)/2}}{(q;q)_\infty}\lim_{\delta\rightarrow 0} \ _ 2\phi_1\left(\genfrac{}{}{0pt}{}{q^3,\dfrac{q^{3\nu +4}}{\delta}}{0};q^3,\delta\right)\\&
\nonumber=1+\dfrac{(-1)^\nu q^{2+\nu (3\nu +7)/2}}{(q;q)_\infty}\lim_{\delta\rightarrow 0}\dfrac{(q^{3\nu +5};q^3)_\infty}{(\delta;q^3)_\infty}\sum_{\tau =0}^{\infty}\dfrac{(-1)^\tau \delta^\tau q^{3\tau (\tau +1)/2}\left(q^{3\nu +5}/\delta;q^3\right)_\tau }{(q^3;q^3)_\tau (q^{3\nu +5};q^3)_\tau }\\&\quad
\nonumber+\dfrac{(-1)^\nu q^{1+\nu (3\nu +5)/2}}{(q;q)_\infty}\lim_{\delta\rightarrow 0}\dfrac{(q^{3\nu +4};q^3)_\infty}{(\delta;q^3)_\infty}\sum_{\tau =0}^{\infty}\dfrac{(-1)^\tau \delta^\tau q^{3\tau (\tau +1)/2}\left(q^{3\nu +4}/\delta;q^3\right)_\tau }{(q^3;q^3)_\tau (q^{3\nu +4};q^3)_\tau }
\\&
\nonumber=1+(-1)^\nu q^{2+\nu (3\nu +7)/2}\dfrac{(q^{3\nu +5};q^3)_\infty}{(q;q)_\infty}\sum_{\tau =0}^{\infty}\dfrac{q^{\tau (3\nu +3\tau +5)}}{(q^3;q^3)_\tau \left(q^{3\nu +5};q^3\right)_\tau }\\&\quad
\nonumber+(-1)^\nu q^{1+\nu (3\nu +5)/2}\dfrac{(q^{3\nu +4};q^3)_\infty}{(q;q)_\infty}\sum_{\tau =0}^{\infty}\dfrac{q^{\tau (3\nu +3\tau +4)}}{(q^3;q^3)_\tau \left(q^{3\nu +4};q^3\right)_\tau }\\&
\nonumber=1+(-1)^\nu q^{2+\nu (3\nu +7)/2}\frac{(q^2;q^3)_\infty}{(q;q)_\infty}\sum_{\tau =0}^{\infty}\frac{q^{\tau (3\nu +3\tau +5)}}{(q^3;q^3)_\tau (q^2;q^3)_{\nu +\tau +1}}\\&\quad
+(-1)^\nu q^{1+\nu (3\nu +5)/2}\frac{(q;q^3)_\infty}{(q;q)_\infty}\sum_{\tau =0}^{\infty}\frac{q^{\tau (3\nu +3\tau +4)}}{(q^3;q^3)_\tau (q;q^3)_{\nu +\tau +1}}.
\end{align}
Using Theorem \ref{M3} and equation \ref{eq1}, we arrive at \eqref{pi}.

\subsection*{Proof of Theorem \ref{T3}}\label{OP}
From the Gauss second identity \eqref{psi}, we have
\begin{align}
\nonumber&\dfrac{1}{\psi(-q)}\left(\sum_{\tau =0}^{\nu }(-1)^\tau q^{\tau (2\tau +1)}-\sum_{\tau =0}^{\nu -1}(-1)^\tau q^{(\tau +1)(2\tau +1)}\right)\\&\nonumber=1-\dfrac{1}{\psi(-q)}\sum_{\tau =\nu +1}^{\infty}(-1)^\tau q^{\tau (2\tau +1)}+\dfrac{1}{\psi(-q)}\sum_{\tau =\nu }^{\infty}(-1)^\tau q^{(\tau +1)(2\tau +1)}
\\&\nonumber
=1+(-1)^\nu \dfrac{q^{3+\nu (2\nu +5)}}{\psi(-q)}\sum_{\tau =0}^{\infty}(-1)^\tau q^{\tau (4\nu +2\tau +5)}+(-1)^\nu \dfrac{q^{1+\nu (2\nu +3)}}{\psi(-q)}\sum_{\tau =0}^{\infty}(-1)^\tau q^{\tau (4\nu +2\tau +3)}\\&\nonumber
=1+(-1)^\nu \dfrac{q^{3+\nu (2\nu +5)}}{\psi(-q)}\lim_{\delta\rightarrow 0}\ _2\phi_1\left(\genfrac{}{}{0pt}{}{q^4,\dfrac{q^{4\nu +7}}{\delta}}{0};q^4,\delta\right)+(-1)^\nu \dfrac{q^{1+\nu (2\nu +3)}}{\psi(-q)}\lim_{\delta\rightarrow 0}\ _2\phi_1\left(\genfrac{}{}{0pt}{}{q^4,\dfrac{q^{4\nu +5}}{\delta}}{0};q^4,\delta\right)
\\&\nonumber
=1+(-1)^\nu \dfrac{q^{3+\nu (2\nu +5)}}{\psi(-q)}\lim_{\delta\rightarrow 0}\dfrac{(q^{4\nu +7};q^4)_\infty}{(\delta;q^4)_\infty}\sum_{\tau =0}^{\infty}\dfrac{(-1)^\tau \delta^\tau q^{2\tau (\tau +1)}(q^{4\nu +7}/\delta;q^4)_\tau }{(q^4;q^4)_\tau (q^{4\nu +7};q)_\tau }\\&\nonumber\quad
+(-1)^\nu \dfrac{q^{1+\nu (2\nu +3)}}{\psi(-q)}\lim_{\delta\rightarrow 0}\dfrac{(q^{4\nu +5};q^4)_\infty}{(\delta;q^4)_\infty}\sum_{\tau =0}^{\infty}\dfrac{(-1)^\tau \delta^\tau q^{2\tau (\tau +1)}(q^{4\nu +5}/\delta;q^4)_\tau }{(q^4;q^4)_\tau (q^{4\nu +5};q)_\tau }\\&\nonumber
=1+(-1)^\nu q^{3+\nu (2\nu +5)}\frac{(q^3;q^4)_\infty}{\psi(-q)}\sum_{\tau =0}^{\infty}\frac{q^{\tau (4\nu +4\tau +7)}}{(q^4;q^4)_\tau (q^3;q^4)_{\nu +\tau +1}}\\&\quad
+(-1)^\nu q^{1+\nu (2\nu +3)}\frac{(q;q^4)_\infty}{\psi(-q)}\sum_{\tau =0}^{\infty}\frac{q^{\tau (4\nu +4\tau +5)}}{(q^4;q^4)_\tau (q;q^4)_{\nu +\tau +1}}.\label{eq2}
\end{align}
Multiplying equation \eqref{eq2} by $(q^4;q^4)_\infty/(q^2;q^2)_\infty$ and using $\psi(-q)=\frac{(q;q)_\infty(q^4;q^4)_\infty}{(q^2;q^2)_\infty}$, we obtain
\begin{align*}
\dfrac{1}{(q;q)_\infty}&\left(\sum_{\tau =0}^{\nu }(-1)^\tau q^{\tau (2\tau +1)}-\sum_{\tau =0}^{\nu -1}(-1)^\tau q^{(\tau +1)(2\tau +1)}\right)\\&\nonumber=\dfrac{(q^4;q^4)_\infty}{(q^2;q^2)_\infty}+
(-1)^\nu q^{3+\nu (2\nu +5)}\frac{(q^3;q^4)_\infty}{(q;q)_\infty}\sum_{\tau =0}^{\infty}\frac{q^{\tau (4\nu +4\tau +7)}}{(q^4;q^4)_\tau (q^3;q^4)_{\nu +\tau +1}}\\&\quad+(-1)^\nu q^{1+\nu (2\nu +3)}\frac{(q;q^4)_\infty}{(q;q)_\infty}\sum_{\tau =0}^{\infty}\frac{q^{\tau (4\nu +4\tau +5)}}{(q^4;q^4)_\tau (q;q^4)_{\nu +\tau +1}},
\end{align*}
which implies that
\begin{align}\label{dop2}
\nonumber(-1)^\nu &\sum_{n=0}^{\infty}p(n)q^n\left(\sum_{\tau =0}^{\nu }(-1)^\tau q^{\tau (2\tau +1)}-\sum_{\tau =0}^{\nu -1}(-1)^\tau q^{(\tau +1)(2\tau +1)}\right)\\&
=(-1)^\nu \sum_{n=0}^{\infty}p_o(n)q^{2n}+\sum_{n=0}^{\infty}p(3,4,\nu ;n)q^n+\sum_{n=0}^{\infty}p(1,4,\nu ;n)q^n.
\end{align}
Equating coefficients of $q^{2n}$ and $q^{2n+1}$ on both sides of \eqref{dop2}, we arrive at \eqref{p1i} and \eqref{p2i}, respectively.

\subsection*{Proof of Theorem \ref{T4}}\label{DOP}
Proof of \eqref{dpi} is similar to that of \eqref{p2i} but instead of multiplying $(q^4;q^4)_\infty/(q^2;q^2)_\infty$, we multiply $(q^4;q^4)_\infty$.
\subsection*{Proof of Theorem \ref{T5}}
Equation \eqref{podi} follows from \eqref{eq2} and Theorem \ref{M3}.
\subsection*{Proof of Theorem \ref{T6}}
From the Gauss identity \eqref{phi}, we have
\begin{align}
\nonumber\dfrac{1}{\phi(-q)}\left(1+2\sum_{\tau =1}^{\nu }(-1)^\tau q^{\tau ^2}\right)&=1-\dfrac{2}{\phi(-q)}\sum_{\tau =\nu +1}^{\infty}(-1)^\tau q^{\tau ^2}
\\&\nonumber=1+\dfrac{2(-1)^\nu q^{(\nu +1)^2}}{\phi(-q)}\sum_{\tau =0}^{\infty}(-1)^\tau q^{\tau ^2+2\tau (\nu +1)}\\&
\nonumber=1+\dfrac{2(-1)^\nu q^{(\nu +1)^2}}{\phi(-q)}\lim_{\delta\rightarrow 0}\ _2\phi_1\left(\genfrac{}{}{0pt}{}{q^2,\dfrac{q^{2\nu +3}}{\delta}}{0};q^2,\delta\right)\\&\nonumber
=1+\dfrac{2(-1)^\nu q^{(\nu +1)^2}}{\phi(-q)}\lim_{\delta\rightarrow 0}\dfrac{(q^{2\nu +3};q^2)_\infty}{(\delta;q)_\infty}\sum_{\tau =0}^{\infty}\dfrac{(-1)^\tau \delta^\tau q^{\tau (\tau +1)}(q^{2\nu +3}/\delta;q^2)_\tau }{(q^2;q^2)_\tau (q^{2\nu +3};q^2)_\tau }\\&\nonumber
=1+\dfrac{2(-1)^\nu q^{(\nu +1)^2}}{\phi(-q)}(q^{2\nu +3};q^2)_\infty\sum_{\tau =0}^{\infty}\dfrac{q^{2\tau ^2+(2\nu +3)\tau }}{(q^2;q^2)_\tau (q^{2\nu +3};q^2)_\tau }\\&\nonumber
=1+\dfrac{2(-1)^\nu q^{(\nu +1)^2}}{(q;q)_\infty(q;q^2)_\infty}(q^{2\nu +3};q^2)_\infty\sum_{\tau =0}^{\infty}\dfrac{q^{\tau (2\nu +2\tau +3)}}{(q^2;q^2)_\tau (q^{2\nu +3};q^2)_\tau }\\&
=1+\dfrac{2(-1)^\nu q^{(\nu +1)^2}}{(q;q)_\infty}\sum_{\tau =0}^{\infty}\dfrac{q^{\tau (2\nu +2\tau +3)}}{(q^2;q^2)_\tau (q;q^2)_{\nu +\tau +1}}.\label{eq6}
\end{align}
Equation \eqref{oppi} follows from \ref{eq6} and Theorem \ref{M3}.
\subsection*{Proof of Theorem \ref{T7}}
Multiplying the equation \eqref{eq6} by $1/(q^2;q^2)_\infty$ and using $\phi(-q)=(q;q)^2_\infty/(q^2;q^2)_\infty$, we obtain
\begin{equation*}\label{op1}
\dfrac{1}{(q;q)^2_\infty}\left(1+2\sum_{\tau =1}^{\nu }(-1)^\tau q^{\tau ^2}\right)=\dfrac{1}{(q^2;q^2)_\infty}+2(-1)^\nu \frac{q^{(\nu +1)^2}}{(q;q)_\infty(q^2;q^2)_\infty}\sum_{\tau =0}^{\infty}\frac{q^{\tau (2\nu +2\tau +3)}}{(q^2;q^2)_\tau (q;q^2)_{\nu +\tau +1}}
\end{equation*}
and the statements follow easily from Theorem \ref{M3} and the above equation.
\subsection*{Proof of Theorem \ref{T8}}
From the equation \eqref{eqt1}, we have
\begin{align}
\nonumber&\dfrac{1}{(q;q)_\infty}=1-\dfrac{1}{(q;q)_\infty}\sum_{\tau =1}^{\infty}(-1)^\tau q^{\tau (3\tau +1)/2}+\dfrac{1}{(q;q)_\infty}\sum_{\tau =0}^{\infty}(-1)^\tau q^{\tau (3\tau +5)/2+1}\\&\nonumber
=1+\dfrac{q^2}{(q;q)_\infty}\sum_{\tau =0}^{\infty}(-1)^\tau q^{\tau (3\tau +7)/2}+\dfrac{q}{(q;q)_\infty}\sum_{\tau =0}^{\infty}(-1)^\tau q^{\tau (3\tau +5)/2}\\&
\nonumber=1+\dfrac{q^2}{(q;q)_\infty}\lim_{\delta\rightarrow 0}\ _2\phi_1\left(\genfrac{}{}{0pt}{}{q^3,\dfrac{q^5}{\delta}}{0};q^3,\delta\right)+\dfrac{q}{(q;q)_\infty}\lim_{\delta\rightarrow 0} \ _ 2\phi_1\left(\genfrac{}{}{0pt}{}{q^3,\dfrac{q^4}{\delta}}{0};q^3,\delta\right)\\&
\nonumber=1+\dfrac{q^2}{(q;q)_\infty}\lim_{\delta\rightarrow 0}\dfrac{(q^5;q^3)_\infty}{(\delta;q^3)_\infty}\sum_{\tau =0}^{\infty}\dfrac{(-1)^\tau \delta^\tau q^{3\tau (\tau +1)/2}\left(q^5/\delta;q^3\right)_\tau }{(q^3;q^3)_\tau (q^5;q^3)_\tau }\\&\quad
\nonumber+\dfrac{q}{(q;q)_\infty}\lim_{\delta\rightarrow 0}\dfrac{(q^4;q^3)_\infty}{(\delta;q^3)_\infty}\sum_{\tau =0}^{\infty}\dfrac{(-1)^\tau \delta^\tau q^{3\tau (\tau +1)/2}\left(q^4/\delta;q^3\right)_\tau }{(q^3;q^3)_\tau (q^4;q^3)_\tau }
\\&
\nonumber=1+q^{2}\dfrac{(q^{5};q^3)_\infty}{(q;q)_\infty}\sum_{\tau =0}^{\infty}\dfrac{q^{\tau (3\tau +5)}}{(q^3;q^3)_\tau \left(q^{5};q^3\right)_\tau }+q\dfrac{(q^{4};q^3)_\infty}{(q;q)_\infty}\sum_{\tau =0}^{\infty}\dfrac{q^{\tau (3\tau +4)}}{(q^3;q^3)_\tau \left(q^{4};q^3\right)_\tau }\\&
=1+q^{2}\frac{(q^2;q^3)_\infty}{(q;q)_\infty}\sum_{\tau =0}^{\infty}\frac{q^{\tau (3\tau +5)}}{(q^3;q^3)_\tau (q^2;q^3)_{\tau +1}}
+q\frac{(q;q^3)_\infty}{(q;q)_\infty}\sum_{\tau =0}^{\infty}\frac{q^{\tau (3\tau +4)}}{(q^3;q^3)_\tau (q;q^3)_{\tau +1}}.\label{eqt8}
\end{align}
Using equation \eqref{eqt8} and Theorem \ref{M3}, we arrive at \eqref{eq8}.

Similarly one can prove \eqref{eq9} by using Gauss second identity \eqref{psi}.
\subsection*{Proof of Theorem \ref{CP}} First, we will prove the following lemma:
\begin{lemma}
	For $\nu \geq 1$ and $m\geq 0$, we have
	\begin{equation}\label{l1}
	\sum_{n=0}^{\infty}N_\nu (n)q^n=\sum_{n=0}^{\infty}p(n)q^n\sum_{\tau =0}^{m}(-1)^\tau q^{(\nu +\tau )(\nu +\tau -1)/2}+(-1)^{m+1}\sum_{n=0}^{\infty}N_{\nu +m+1}(n)q^n.
	\end{equation}
\end{lemma}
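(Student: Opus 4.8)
The plan is to prove \eqref{l1} by induction on $m$, using the generating function from Theorem \ref{Nk} together with the two-variable $q$-series identities that underlie the other proofs in this paper. The base case $m=0$ asserts that
\[
\sum_{n=0}^{\infty}N_\nu (n)q^n=\sum_{n=0}^{\infty}p(n)q^n\cdot q^{\nu (\nu -1)/2}-\sum_{n=0}^{\infty}N_{\nu +1}(n)q^n,
\]
so really everything reduces to establishing this single recurrence
\[
\sum_{n=0}^{\infty}N_\nu (n)q^n+\sum_{n=0}^{\infty}N_{\nu +1}(n)q^n=q^{\nu (\nu -1)/2}\cdot\frac{1}{(q;q)_\infty},
\]
and then iterating: replacing $\nu$ by $\nu+1,\nu+2,\dots,\nu+m$, multiplying by alternating signs, and telescoping gives \eqref{l1} in general. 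So the first thing I would do is state the induction and reduce to the $m=0$ identity.

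Next I would prove the $m=0$ recurrence directly from Theorem \ref{Nk}. Writing $F_\nu(q)=\sum_n N_\nu(n)q^n=q^{\nu(\nu-1)/2}\sum_{\tau\ge0}\dfrac{q^{\tau(\nu+\tau)}}{(q;q)_\tau(q;q)_{\nu+\tau-1}}$, I want to show $F_\nu(q)+F_{\nu+1}(q)=q^{\nu(\nu-1)/2}/(q;q)_\infty$. Since $(\nu+1)\nu/2=\nu(\nu-1)/2+\nu$, one has $F_{\nu+1}(q)=q^{\nu(\nu-1)/2}\sum_{\tau\ge0}\dfrac{q^{\nu+\tau(\nu+\tau+1)}}{(q;q)_\tau(q;q)_{\nu+\tau}}$. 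After pulling out the common factor $q^{\nu(\nu-1)/2}$, the claim becomes
\[
\sum_{\tau=0}^{\infty}\frac{q^{\tau(\nu+\tau)}}{(q;q)_\tau(q;q)_{\nu+\tau-1}}
+\sum_{\tau=0}^{\infty}\frac{q^{\nu+\tau(\nu+\tau+1)}}{(q;q)_\tau(q;q)_{\nu+\tau}}=\frac{1}{(q;q)_\infty}.
\]
This I would prove combinatorially: the left side counts exactly those partitions in which all parts $\le\nu-1$ occur at least once and the parts below the $\nu$-Durfee rectangle are less than its width, \emph{plus} those where the same holds with $\nu$ in place of $\nu-1$ and the $(\nu+1)$-Durfee rectangle — and a short bijective argument (splitting on whether the part $\nu$ is missing or present, equivalently on the precise width of the relevant Durfee rectangle of a \emph{general} partition) shows these two families partition all of $p(n)$. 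Alternatively, one can get it analytically by applying Heine's transformation to the $_2\phi_1$ limit already used in the proof of Theorem \ref{CP} and recognizing the difference of the two series as a telescoping collapse to $(q;q)_\infty^{-1}$.

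The induction step is then routine: assuming \eqref{l1} for $m$, apply the $m=0$ recurrence (with $\nu$ replaced by $\nu+m+1$) to rewrite $(-1)^{m+1}\sum_n N_{\nu+m+1}(n)q^n$ as $(-1)^{m+1}\sum_n p(n)q^n\,q^{(\nu+m+1)(\nu+m)/2}+(-1)^{m+2}\sum_n N_{\nu+m+2}(n)q^n$, and fold the first term into the finite sum over $\tau$, giving \eqref{l1} with $m+1$ in place of $m$. The main obstacle is the $m=0$ identity itself — specifically, pinning down the clean bijection (or the exact Heine specialization) that collapses the sum of the two Durfee-rectangle generating functions to $1/(q;q)_\infty$; once that lemma-within-the-lemma is in hand, everything else is bookkeeping. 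Finally, I'd note that Theorem \ref{CP} follows by letting $m\to\infty$ in \eqref{l1}: since $N_{\nu+m+1}(n)=0$ once $(\nu+m+1)(\nu+m)/2>n$, the error term vanishes and \eqref{CP1} drops out.
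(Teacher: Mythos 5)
Your proposal follows essentially the same route as the paper: induction on $m$, with everything reduced to the $m=0$ recurrence $\sum_n N_\nu(n)q^n = q^{\nu(\nu-1)/2}\sum_n p(n)q^n - \sum_n N_{\nu+1}(n)q^n$, and the induction step carried out by exactly the telescoping you describe. The one step you flag as the ``main obstacle'' is in fact a one-line manipulation in the paper: writing $\dfrac{1}{(q;q)_{\nu+\tau-1}}=\dfrac{1-q^{\nu+\tau}}{(q;q)_{\nu+\tau}}$ in the series of Theorem \ref{Nk} splits the first sum so that the subtracted terms are precisely the terms of $\sum_n N_{\nu+1}(n)q^n$, and what remains is the classical Durfee-rectangle identity $\sum_{\tau\geq 0} q^{\tau(\nu+\tau)}/\bigl((q;q)_\tau(q;q)_{\nu+\tau}\bigr)=1/(q;q)_\infty$, cited from Gordon--Houten; this is exactly the termwise cancellation implicit in your combinatorial sketch, so no separate bijection or Heine transformation is needed.
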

\begin{proof}
From the Theorem \ref{Nk}, we have
	\begin{align}\label{p1}
	\nonumber\sum_{n=0}^{\infty}N_\nu (n)q^n&=q^{\nu (\nu -1)/2}\sum_{\tau =0}^{\infty}\frac{q^{\tau (\nu +\tau )}}{(q;q)_\tau (q;q)_{\nu +\tau -1}}\\&\nonumber
	=q^{\nu (\nu -1)/2}\sum_{\tau =0}^{\infty}\frac{(1-q^{\nu +\tau })q^{\tau (\nu +\tau )}}{(q;q)_\tau (q;q)_{\nu +\tau }}\\&\nonumber
	=q^{\nu (\nu -1)/2}\sum_{\tau =0}^{\infty}\frac{q^{\tau (\nu +\tau )}}{(q;q)_\tau (q;q)_{\nu +\tau }}-q^{\nu (\nu +1)/2}\sum_{\tau =0}^{\infty}\frac{q^{\tau (\nu +\tau +1)}}{(q;q)_\tau (q;q)_{\nu +\tau }}\\&\
	=q^{\nu (\nu -1)/2}\sum_{n=0}^{\infty}p(n)q^n-\sum_{n=0}^{\infty}N_{\nu +1}(n)q^n,\quad (\text{from \cite[p. 91]{GH} and \eqref{nk1}})
	\end{align}
	which is $m=0$ case of \eqref{l1}. Now assume that \eqref{l1} is holds for some $m\geq 0$, then, from \eqref{l1} and \eqref{p1}, we have
	\begin{align*}
	\sum_{n=0}^{\infty}N_\nu (n)q^n&=\sum_{n=0}^{\infty}p(n)q^n\sum_{\tau =0}^m(-1)^\tau q^{(\nu +\tau )(\nu +\tau -1)/2}\\&\quad+(-1)^{m+1}\left(q^{(\nu +m)(\nu +m+1)/2}\sum_{n=0}^{\infty}p(n)q^n-\sum_{n=0}^{\infty}N_{\nu +m+2 }(n)q^n\right)\\&
	=\sum_{n=0}^{\infty}p(n)q^n\sum_{\tau =0}^{m+1}(-1)^\tau q^{(\nu +\tau )(\nu +\tau -1)/2}+(-1)^{m+2}\sum_{n=0}^{\infty}N_{\nu +m+2}(n)q^n,
	\end{align*}
	which is the $m+1$ case of \eqref{l1}.
\end{proof}
For $n\leq (\nu +m)(\nu +m+1)/2$, equating coefficients of $q^n$ on both sides of \eqref{l1},
\begin{equation}\label{l4}
\sum_{\tau =0}^{m}(-1)^\tau p\left(n-\dfrac{(\nu +\tau )(\nu +\tau -1)}{2}\right)=N_\nu (n).
\end{equation}
Now the statement \eqref{CP1} is obvious from \eqref{l4} for sufficiently large $m$.

\section{Inequalities}\label{In}
Let us define $pp(n)$ to be the number of bipartitions of $n$, and $p_o(n)$ be the number of partitions of $n$ into odd parts. The generating function for $pp(n)$ and $p_o(n)$ are given by \[\sum_{n=0}^{\infty}pp(n)=\dfrac{1}{(q;q)^2_\infty}\]
and 
\[\sum_{n=0}^{\infty}p_o(n)=\dfrac{1}{(q;q^2)_\infty}=\dfrac{(q^2;q^2)}{(q;q)_\infty},\] respectively.

\begin{corollary}
	For $n$ odd or $\nu $ even,
	\begin{equation}\label{bpc}
	(-1)^\nu \left(pp(n)+2\sum_{\tau =1}^{\nu }(-1)^\tau  pp(n-\tau ^2)\right)\geq 0,
	\end{equation}
with strict inequality if $n\geq (\nu +1)^2$.
\end{corollary}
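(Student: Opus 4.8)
The plan is to obtain \eqref{bpc} as a direct consequence of Theorem~\ref{T7}, in the same spirit that \eqref{opi} follows from \eqref{oppi}. Recall that \eqref{ppi} and \eqref{ppi1} express the truncated sum
\[
(-1)^\nu \lb(pp(N)+2\sum_{\tau =1}^{\nu }(-1)^\tau pp(N-\tau ^2)\rb)
\]
for $N=2n$ and $N=2n+1$ respectively. First I would treat the case $N=2n+1$ (i.e. $n$ odd): here \eqref{ppi1} gives the value as $2\,pp_e(2n+1)$, and since $pp_e$ counts a set of combinatorial objects it is a nonnegative integer, so the left side of \eqref{bpc} is $\geq 0$ immediately. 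For the case $N=2n$ with $\nu$ even, \eqref{ppi} gives the value as $(-1)^\nu p(n)+2\,pp_e(2n)=p(n)+2\,pp_e(2n)$ (using $\nu$ even), which is again a sum of nonnegative quantities, hence $\geq 0$. This covers exactly the hypotheses ``$n$ odd or $\nu$ even''.

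For the strict inequality when $n\geq(\nu+1)^2$, I would argue that the relevant right-hand side is strictly positive in this range. When $n=2n'+1$ is odd and $n\ge(\nu+1)^2$, it suffices to exhibit one bipartition $(\pi_1,\pi_2)$ counted by $pp_e(2n+1)$: for instance take $\pi_1$ empty and let $\pi_2$ be a partition whose odd parts realize the minimal configuration counted by $M(1,2,\nu;2n+1)$ — by Theorem~\ref{M3} this set is nonempty precisely once the exponent $q^{1+\nu(\nu+1)\cdot\text{(stuff)}}$, i.e. the smallest $n$ with $M(1,2,\nu;n)>0$, does not exceed $2n+1$; the threshold works out to $(\nu+1)^2$ after accounting for the shift in \eqref{oppi}. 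When $n\geq(\nu+1)^2$ and $\nu$ is even, $p(n)\geq 1$ already forces strict positivity of $p(n)+2\,pp_e(2n)$. So in both sub-cases the bound is strict.

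The main obstacle I anticipate is bookkeeping the exact threshold: one must check that the least $n$ for which $2\,pp_e(n)$ (resp. $p(n/2)+2\,pp_e(n)$) becomes positive is exactly $(\nu+1)^2$, which amounts to reading off the minimal exponent on the right side of \eqref{oppi}/\eqref{ppi}/\eqref{ppi1} as produced by Theorem~\ref{M3} with $a=1$, $m=2$ and the Durfee-rectangle parameter $\nu$. Concretely, the generating series in \eqref{oppi} carries the prefactor $q^{(\nu+1)^2}$ (visible already in \eqref{eq6}), so any coefficient of $q^n$ with $n<(\nu+1)^2$ vanishes and the first nonzero term occurs at $n=(\nu+1)^2$; transporting this through the multiplication by $1/(q^2;q^2)_\infty$ in the proof of Theorem~\ref{T7} preserves the fact that the first nonzero coefficient sits at $(\nu+1)^2$, and multiplication by the nonnegative series $1/(q^2;q^2)_\infty$ only makes later coefficients larger. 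Hence for $n\ge(\nu+1)^2$ the displayed quantity is a positive integer, completing the proof. \QED
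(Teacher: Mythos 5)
Your argument is correct, but it follows a different (though closely related) path from the paper's. The paper works directly with the $q$-series: it rearranges \eqref{eq6} into the identity \eqref{pp},
\begin{equation*}
\dfrac{(-1)^\nu }{(q;q)^2_\infty}\left(1+2\sum_{\tau =1}^{\nu }(-1)^\tau q^{\tau ^2}\right)=\dfrac{(-1)^\nu }{(q^2;q^2)_\infty}+2B_\nu (q),
\end{equation*}
with $B_\nu(q)$ having nonnegative coefficients, and then uses the fact that $1/(q^2;q^2)_\infty$ is an \emph{even} series with positive coefficients to conclude that the coefficient of $q^n$ on the right is nonnegative whenever $\nu$ is even or $n$ is odd. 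You instead read the inequality off the combinatorial identities \eqref{ppi} and \eqref{ppi1} of Theorem~\ref{T7}: the truncated sum equals $2pp_e(2n+1)\geq 0$ for odd arguments, and $(-1)^\nu p(n)+2pp_e(2n)=p(n)+2pp_e(2n)\geq 0$ for even arguments when $\nu$ is even. Since Theorem~\ref{T7} is itself proved from \eqref{eq6}, the two proofs are equivalent in substance; yours buys a combinatorial explanation of the nonnegativity (the sum counts objects), and you are in fact more explicit about the strict-inequality clause than the paper, which does not really address it. Two points to tighten. First, Theorem~\ref{T7} is stated only for $n\geq(\nu+1)^2$, whereas the corollary asserts nonnegativity for all $n$; you should note that the identities \eqref{ppi}--\eqref{ppi1} actually hold for every $n\geq 0$ (the underlying generating-function identity is exact, with $pp_e(N)=0$ for $N<(\nu+1)^2$), or else argue those small $n$ from \eqref{pp} directly. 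Second, for strictness at \emph{every} $N\geq(\nu+1)^2$, not merely at $N=(\nu+1)^2$, the clean reason is that $B_\nu(q)$ equals $q^{(\nu+1)^2}$ times $1/(q;q)_\infty$ times a series with constant term $1$ and nonnegative coefficients, and $1/(q;q)_\infty$ has all coefficients positive; your phrase ``multiplication by a nonnegative series only makes later coefficients larger'' does not by itself exclude zero coefficients beyond the first one (e.g.\ $q^4/(q^2;q^2)_\infty$ has zero coefficient at $q^5$), so the presence of the factor $1/(q;q)_\infty$ should be invoked explicitly.
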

\begin{proof}
Using the fact that $\phi(-q)=\dfrac{(q;q)^2_\infty}{(q^2;q^2)_\infty}$ in \eqref{eq6} and after rearranging the terms, we obtain
\begin{align}\label{pp}
\dfrac{(-1)^\nu }{(q;q)^2_\infty}\left(1+2\sum_{\tau =1}^{\nu }(-1)^\tau q^{\tau ^2}\right)=\dfrac{(-1)^\nu }{(q^2;q^2)_\infty}+2B_\nu (q),
\end{align}
where
\[B_\nu (q)=\frac{q^{(\nu +1)^2}}{(q;q)_\infty(q^2;q^2)_\infty}\sum_{\tau =0}^{\infty}\frac{q^{2\tau ^2+(2\nu +3)\tau }}{(q^2;q^2)_\tau (q;q^2)_{\nu +\tau +1}}\]
has nonnegative coefficient. Since $\dfrac{1}{(q^2;q^2)_\infty}$ is an even function of $q$ with positive coefficients; every term in \eqref{pp} is nonnegative if $\nu$ is even, however if $\nu$ is odd every coeeficients of odd powers are nonnegative.
\end{proof}
\begin{corollary}
With strict inequality if $n> \nu (2\nu +3)$. We have
\item[1.]  For $n$ odd or $\nu $ even,
\begin{align}\label{p12}
(-1)^{\nu -1}\sum_{\tau =0}^{\nu -1}(-1)^\tau \big(p\left(n-\tau (2\tau +1)\right)-p\left(n-(\tau +1)(2\tau +1)\right)\big)\leq p(n-\nu (2\nu +1)).
\end{align}

\item[2.] For odd $n$,
\begin{align}
(-1)^{\nu -1}\sum_{\tau =0}^{\nu -1}(-1)^\tau \big(p_o\left(n-\tau (2\tau +1)\right)-p_o\left(n-(\tau +1)(2\tau +1)\right)\big)\leq p_o\left(n-\nu (2k+1)\right).
\end{align}
\end{corollary}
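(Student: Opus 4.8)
The plan is to derive both inequalities directly from the partition identities of Theorems \ref{T3} and \ref{T4} by reinstating the $\tau=\nu$ term that those theorems carry but \eqref{p12} omits. For an arbitrary nonnegative integer $N$ set
\[
V_\nu(N)=\sum_{\tau=0}^{\nu}(-1)^\tau p\big(N-\tau(2\tau+1)\big)-\sum_{\tau=0}^{\nu-1}(-1)^\tau p\big(N-(\tau+1)(2\tau+1)\big),
\]
and let $U_\nu(N)$ denote the same expression with the first sum truncated at $\nu-1$, so that the left side of \eqref{p12} equals $(-1)^{\nu-1}U_\nu(n)$. Splitting off the $\tau=\nu$ term gives $V_\nu(N)=U_\nu(N)+(-1)^\nu p\big(N-\nu(2\nu+1)\big)$, and an easy rearrangement turns this into
\[
(-1)^{\nu-1}U_\nu(N)=p\big(N-\nu(2\nu+1)\big)-(-1)^\nu V_\nu(N).
\]
Hence \eqref{p12} is equivalent to $(-1)^\nu V_\nu(N)\ge 0$, and it is strict exactly when $(-1)^\nu V_\nu(N)>0$; the identical reduction with $p$ replaced by $p_o$ reduces part (2) to $(-1)^\nu V_\nu^{o}(N)\ge 0$ for odd $N$.

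I would then quote Theorem \ref{T3}: its identities \eqref{p1i} and \eqref{p2i} (covering even and odd arguments respectively) say, in this notation,
\begin{align*}
(-1)^\nu V_\nu(2n)&=(-1)^\nu p_o(n)+p(3,4,\nu;2n)+p(1,4,\nu;2n),\\
(-1)^\nu V_\nu(2n+1)&=p(3,4,\nu;2n+1)+p(1,4,\nu;2n+1).
\end{align*}
Since each $p(a,4,\nu;\cdot)$ counts partitions and is therefore nonnegative, $(-1)^\nu V_\nu(N)\ge 0$ whenever $N$ is odd, and also whenever $N$ is even provided $\nu$ is even (so that $(-1)^\nu p_o(n)=p_o(n)\ge 0$); this is exactly the hypothesis ``$n$ odd or $\nu$ even'', and \eqref{p12} follows. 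For part (2) the corresponding rewriting of \eqref{dpi} in Theorem \ref{T4} gives $(-1)^\nu V_\nu^{o}(2n+1)=p_o(3,4,\nu;2n+1)+p_o(1,4,\nu;2n+1)\ge 0$, which is also why only odd arguments occur there.

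For the strict inequalities it suffices to show $p(1,4,\nu;N)>0$, respectively $p_o(1,4,\nu;N)>0$, once $N>\nu(2\nu+3)$. By Theorem \ref{M3} the series $\sum_n M(1,4,\nu;n)q^n$ begins with the monomial $q^{1+\nu(2\nu+3)}$, coming from the partition $1+5+\cdots+(4\nu+1)$, and the factor $1/(q;q^4)_{\nu+1}$ in its $\tau=0$ summand contains $1/(1-q)$, so one may adjoin arbitrarily many parts equal to $1$; hence $M(1,4,\nu;m)\ge 1$ for every $m\ge 1+\nu(2\nu+3)$. Taking the complementary partition empty in the definitions of $p(1,4,\nu;\cdot)$ and of $p_o(1,4,\nu;\cdot)$ — legitimate because parts $\equiv 1\pmod 4$ are odd — gives $p(1,4,\nu;N)\ge M(1,4,\nu;N)$ and $p_o(1,4,\nu;N)\ge M(1,4,\nu;N)$, so both are positive for $N\ge 1+\nu(2\nu+3)$, i.e. for $N>\nu(2\nu+3)$; combined with the identities above this yields $(-1)^\nu V_\nu(N)\ge p(1,4,\nu;N)>0$ and likewise for $V_\nu^{o}$. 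The point needing a little care is the remark, used throughout, that \eqref{p1i}, \eqref{p2i} and \eqref{dpi} hold for \emph{all} admissible arguments and not merely in the ranges stated in Theorems \ref{T3}--\ref{T4}; this is immediate since \eqref{dop2} and its $p_o$-counterpart are genuine identities of formal power series, but it should be said explicitly, as should the routine checking of the signs relating $(-1)^\nu$, $(-1)^{\nu-1}$ and the parity of $N$.
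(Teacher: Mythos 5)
Your proposal is correct and follows essentially the same route as the paper, which simply reduces the corollary to the identity \eqref{eq2}/\eqref{dop2} (equivalently Theorems \ref{T3} and \ref{T4}) and observes that the extra terms $p(a,4,\nu;\cdot)$ are nonnegative, with the $(-1)^\nu p_o(n)$ term absorbed by the hypothesis ``$n$ odd or $\nu$ even.'' Your treatment of the strict inequality via $M(1,4,\nu;m)\geq 1$ for $m>\nu(2\nu+3)$ is a worthwhile detail that the paper leaves implicit.
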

\begin{proof}
Proof is similar to that of \eqref{bpc} but instead of using \eqref{eq6}, we use \eqref{eq2}. So we omit the details here.
\end{proof}

\end{document}